\theoremstyle{plain}
\newtheorem{theorem}{Theorem}
\theoremstyle{definition} % For roman text in the body
\newtheorem{definition}{Definition}
\newtheorem{remark}{Remark}
\renewcommand{\vec}[1]{\mathbf{#1}}
\newcommand{\eps}{\varepsilon}
\newcommand{\average}[1]{ \left\langle#1 \right\rangle}
\newcommand{\NullL}{{\rm Null} \,\opL}
\newcommand{\Amat}{\mathsf{A}}
\newcommand{\opL}{\mathcal{L}}
\newcommand{\dis}{\text{dis}}
\newcommand{\tsigmas}{\tilde{\sigma}_s}
\newcommand{\tsigmasn}{\tilde{\sigma}_{sn}}
\newcommand{\rd}{\mathrm{d}}
\newcommand{\RR}{\mathbb{R}}
\newcommand{\Sp}{\mathbb{S}}
\newcommand{\Kn}{\mathsf{Kn}}
\newcommand{\inner}{\frac{1-\mu}{\epsilon}}
\title{Stability of inverse transport equation in diffusion scaling and Fokker-Planck limit}
\author{Ke Chen} 
\address{Mathematics Department, University of Wisconsin-Madison, 480 Lincoln Dr., Madison, WI 53705 USA.}
\email{ke@math.wisc.edu}
\author{Qin Li} 
\address{Mathematics Department and Wisconsin Institute of Discovery, University of Wisconsin-Madison, 480 Lincoln Dr., Madison, WI 53705 USA.}
\email{qinli@math.wisc.edu}
\author{Li Wang} 
\address{Department of Mathematics, Computational and Data-Enabled Science and Engineering Program, State University of New York at Buffalo, 244 Mathematics Building, Buffalo, NY 14206 USA.}
\email{lwang46@buffalo.edu}
\thanks{The work of K.C. and Q. L. is supported in part by a start-up fund of Q.L. from UW-Madison and National Science Foundation under the grant DMS-1619778 and DMS-1107291: RNMS KI-Net. The work of L.W. is supported in part by a start-up fund from SUNY Buffalo and the National Science Foundation under the grant DMS-1620135. Both Q.L. and L.W. are grateful to Prof. Kui Ren's inspiring discussions.}
\begin{document}
\maketitle

\begin{abstract}
We consider the inverse problem of reconstructing the scattering and absorption coefficients using boundary measurements for a time dependent radiative transfer equation (RTE). As the measurement is mostly polluted by errors, both experimental and computational, an important question is to quantify how the error is amplified in the process of reconstruction. In the forward setting, the solution to the RTE behaves differently in different regimes, and the stability of the inverse problem vary accordingly. In particular, we consider two scalings in this paper. The first one concerns with a diffusive scaling whose macroscopic limit is a diffusion equation. In this case, we showed, following the similar approach as in [Chen, Li and Wang, arXiv:1703.00097], that the stability degrades when the limit is taken. The second one considers a highly forward peaked scattering, wherein the scattering operator is approximated by a Fokker-Planck operator as a limit. In this case, we showed that a fully recover of the scattering coefficient is less possible in the limit, whereas obtaining a rescaled version of the scattering coefficient becomes more practice friendly. 
\end{abstract}

%%%%%%%%%%%%%%%%%%%%%%%%%%%%%%%%%%%%%%%%%%%%%%%%%%
\section{Introduction}
Radiative transfer equation (RTE) describes the dynamics of photon particles propagating in scattering and absorbing media~\cite{CaseZweifel}. A typical form reads as 
\begin{equation} \label{eqn:000}
\partial_t f + v\cdot\nabla_xf = \int k(x,v,v')f(x,v')\rd{v'} - \sigma(x,v) f\,,
\end{equation}
equipped with a Dirichlet boundary condition
\begin{equation} \label{eqn:BC}
f|_{\Gamma_-} = \phi(t,x,v)\,.
\end{equation}
Here $f(t, x,v)$ is the distribution of particles at location $x \in\Omega\subset\RR^d$ moving with velocity $v \in \Sp^{d-1}$. Since photons travel with a fixed speed, the velocity $v$ is normalized to $|v|=1$. $k(x,v,v')$ is the scattering cross section, representing the probability of particles that move in direction $v'$ changing to direction $v$. $\sigma$ is the total scattering coefficient that consists of the amount of photon particles being scattering and absorbed by the material. $k$ and $\sigma$ constitute the main optical property of the material.

The boundary condition \eqref{eqn:BC} is a common choice for RTE, and $\Gamma_-$ represents the ``incoming" portion of the boundary, i.e., 
\begin{equation}\label{eqn:boundary}
\Gamma_- = \{(x,v): x\in\partial\Omega\,, ~  v\cdot n_x <0\}\,,
\end{equation} 
where $n_x$ is the unit outer normal direction of the boundary. Similarly, one can define the ``outgoing" portion of the boundary by
\begin{equation}\label{eqn:boundary2}
\Gamma_+ = \{(x,v): x\in\partial\Omega\,, ~  v\cdot n_x >0\}\,.
\end{equation} 
The well-posedness of the forward problem \eqref{eqn:000} \eqref{eqn:BC} is summarized in~\cite{Lions93}.

RTE \eqref{eqn:000} is often incorporated with different scales that lead it to different equations. One typical scaling is the diffusive scaling, under which the RTE is well approximated by a diffusion equation 
\begin{equation*}
\partial_t\rho = C\nabla_x\left(\frac{1}{\sigma_s}\nabla_x\rho\right) + \sigma_a \rho\,,
\end{equation*}
where $\rho(t,x) = \int_{\mathbb{S}^{d-1}}f \rd v$, and $\sigma_s$, $\sigma_a$ related to $k$ and $\sigma$ will be defined later. $C$ is a generic constant depending on the dimension of the problem. This scaling is encountered in the long time limit with a strong scattering effect. Another is the Fokker-Planck scaling which emphasizes the highly forward peaked scattering. In this case, \eqref{eqn:000} reduces to 
\begin{equation*}
\partial_t f + v\cdot\nabla_xf = \mathcal{L}_\text{FP}f\,,
\end{equation*}
where $$\mathcal{L}_\text{FP} = \left[   \frac{\partial}{\partial v_3} (1-v_3^2) \frac{\partial}{\partial v_3} + \frac{1}{1-v_3^2} \frac{\partial^2}{ \partial \psi^2}   \right] \,,$$ and $v = (\sqrt{1-v_3^2}\cos\psi,\sqrt{1-v_3^2}\sin\psi,v_3)$. In both scenarios, theory exits regarding the derivation, validity, and asymptotic error in the approximation, the reader can make references to \cite{LK74, BSS84} for the former case and \cite{Larsen_FP, Pomraning_FP} for the latter. 

We study the inverse problem in this paper, with special attention paid to how its stability varies under the above two scalings. Unlike the forward setting wherein the optical properties $k$ and $\sigma$ are given, and one amounts to solve $f(t,x,v)$ for a specific boundary condition~\eqref{eqn:BC}, in the inverse problem setting, one tries to recover the unknown optical properties from boundary measurements of $f(t,x,v)$. To be more precise, we define the albedo operator as a mapping from the boundary condition $\phi(t,x,v)$ to the outgoing data $f|_{\Gamma_+}$:
\begin{equation*}
\mathcal{A}(k,\sigma):\quad\phi\mapsto f|_{\Gamma_+}\,,
\end{equation*}
then by adjusting the incoming data $\phi$, and measuring the corresponding outgoing data $f|_{\Gamma_+}$, one gains a full knowledge of $\mathcal{A}$, which can be used to determine $k$ and $\sigma$.

The inverse RTE problem benefits a broad application in optical tomography, atmospheric science and aerospace engineering. Optical tomography, with its major application in medical imaging, utilizes scattered light as a probe of structural variations in the optical properties of the tissue. Specifically, a narrow collimated beam of low energy visible or near infrared light is sent into biological tissues, and then collected by an array of detectors after it propagates through the media. The measurements collected are used to recover the optical properties of the media.  In atmospheric science, or remote sensing, satellites cumulate hyperspectral light reflected from the earth and is used to infer mineral or plant distribution on the ground. In aerospace engineering, pictures taken by spacecrafts in the universe (Galileo's pictures from Jupiter, or Cassini's pictures from Saturn for example) are sent back to the earth for analyzing mineral/gas distribution on different planets. In all the applications, the forward solver for the light propagation is described by the RTE. One measures the reflected or propagated light intensity to reconstruct the optical properties, with which tissue/ground/gas components are inferred.

On the analytical side, there has been a vast literature on the wellposedness and stability of the inverse problem. In a pioneering paper~\cite{ChoulliStefenov}, the authors showed that both $k$ and $\sigma$ can be uniquely determined by the incoming-to-outgoing map $\mathcal{A}$, assuming that $\sigma$ is $v$-independent. With $v$ dependence, the uniqueness up to the gauge-invarience was shown in~\cite{Stefenov2}. The analysis is done through performing the singular decomposition: one separates the collected $f|_{\Gamma_+}$ data according to the singularities, and different parts are in charge of recovering different coefficients. Another approach is to linearize the equation before applying inverse Born series, and followed by showing the convergence of the series~\cite{Machida_Schotland_inverse}. The results on the stability of the ``inverse" dates back to~\cite{Wang} and was made systematic in~\cite{Bal09, Bal10a, Bal08}. Many papers concern the time-dependent case and the associated stability analysis has also been conducted~\cite{Larsen88, Romanov96, CS96}, and also \cite{Bal_review} for a review.

On the numerical side, special care is needed to address for the illposedness of the problems, both inherited from the continuous counterpart, and due to the incomplete corrupted data. Indeed, to uniquely determine $k$ and $\sigma_a$, one needs a full knowledge of $\mathcal{A}$, which is impractical in real applications; and measurement error can easily propagate and get exaggerated. Typically Tikhonov type regularization is used to balance the pollution and the error tolerance, and the type of regularization embeds some prior knowledge. See~\cite{SRH05} using the standard $L_2$,~\cite{Tang} using TV regularization for the least variance,~\cite{Ren_review} summarizing $H_1$ regularization for some regularity, and $L_1$ regularization for sparsity. See also Tikhonov type regularization used on each element in the inverse Born series~\cite{Machida:16,Machida_Schotland_inverse}. Besides the illposedness, the size of the problem also brings extra difficulties, and Jacobian-type techniques~\cite{Arridge_gradient_2013} are introduced to advance the computation.

In the presence of different scales, however, the above mentioned theory or algorithms cannot be directly applied since the inverse problem may completely change its type. One example is the diffusive regime, while the inverse RTE with sufficient variation in measurement is shown to be well-posed, its diffusion limit is the Calder\'on type problem which is well acknowledged to be ill-posed \cite{Uhlmann03,Uhlmann09}. Our goal in this paper is to provide a rigorous connection between different scalings in the inverse setting, and show how stability varies with the scaling parameter. For the diffusive scaling, the connection is observed in ~\cite{RenBalHielscher_trans_diff, Arridge99, Arridge98} and addressed in ~\cite{Arridge_Schotland09,ChenLiWang}. A similar problem on recovering the doping profile in the Boltzmann-Poisson system is presented ~\cite{Gamba}, wherein numerical simulations also implies this relation. In the Fokker-Planck regime, the limit was briefly mentioned in~\cite{Bal_review} but the full discussion was rarely seen in the literature. The main contributions in this paper are
\begin{itemize}
\item[1)] extend our previous analysis with diffusive scaling for steady problems \cite{ChenLiWang} to time-dependent problems, and show that the stability degrades in the diffusion limit;
\item[2)] examine the well-posedness and stability in the Fokker-Planck scaling which has never been studied in detail before.
\end{itemize}

The rest of paper is organized as follows. Section 2 is devoted to diffusion regime and Section 3 is devoted to forward peaked regime, in which diffusion equation and Fokker-Planck equation are obtained as asymptotic limit respectively. In both cases, we utilize the linearization approach, study the well-posedness of the problem in both regimes, and examine the change of stability while passing the limit.

%%%%%%%%%%%%%%%%%%%%%%%%%%%%%%%%%%%%%%%%%%%%%%%%%%%%%%
\section{Diffusion regime}
In this section, we study the wellposedness of the inverse RTE in the diffusion regime. First we briefly recapitulate the properties of RTE and its diffusion limit. For the ease of notation, we assume that the optical properties only have spatial dependence, and rewrite \eqref{eqn:000} as
\begin{equation} \label{eqn:001}
\partial_t f + v\cdot\nabla_xf = \sigma_s(x) \int  (f(t,x,v')-f(t,x,v)) \rd v' - \sigma_a(x) f(t,x,v) \,,
\end{equation}
where $\sigma_s$ is termed the scattering coefficient, and 
\begin{equation*}
\sigma_a(x) = \sigma(x)-\sigma_s(x) 
\end{equation*}
is the absorption coefficient. 

The diffusion regime is achieved in the long time limit and when the scattering is much stronger than the absorption. To this end, we introduce a small parameter---Knudsen number $\Kn$ and rescale the RTE as follows:
\begin{equation}\label{eqn:RTE_s}
\begin{cases}
\Kn \partial_t f+v\cdot \nabla_x f= \frac{1}{\Kn}\sigma_s \mathcal{L}f-\Kn \sigma_a f\quad \text{in}\quad (0,T)\times \Omega \times \mathbb{S}^{n-1} \,,\\
f(0,x,v)=f^I(x,v) \,, \\
f|_{\Gamma_-}(t,x,v)=\phi(t,x,v)  \,.
\end{cases}
\end{equation}
Here the collision operator $\opL$ is an abbreviation of 
\begin{equation}\label{eqn:collision}
\mathcal{L} f (t,x,v)= \int  (f(t,x,v')  - f(t,x,v) )\rd v' \,.
\end{equation}
There are two key features of the collision operator:
\begin{itemize}
\item{Mass conservation}: $\int \mathcal{L}f\rd{v} = 0$.
\item{One dimensional Null space}: By setting $\mathcal{L}f=0$, one gets $f = \int f \rd v$, meaning that $f$ is a constant in velocity domain. We denote it as $\NullL = \text{span}\{\rho(t,x)\}$, the collection of functions that depend on $t$ and $x$ only.
\end{itemize}

%%%%%%%%%%%%%%%%%%%%%%%%%%%%%%%%%%%%%%%%%%%%%%%%%%%%%%
\subsection{Diffusion limit}
When $\Kn\ll 1$, the equation falls into the diffusion regime, and the RTE is asymptotically equivalent to a diffusion equation as $\Kn \rightarrow 0$.
\begin{theorem}\label{thm:diffusion}
Suppose $f$ solves~\eqref{eqn:RTE_s} with initial data $f(0,x, v)=f^I(x)$ and boundary data $f|_{\Gamma_-}=\phi(t,x)$, both of which are independent of velocity $v$. Then as $\Kn\to 0$, $f(t,x,v)$ converges to $\rho(t, x)$, which solves the heat equation:
\begin{equation}\label{eqn:diff}
\begin{cases}
\partial_t\rho -C\nabla_x\cdot \left(\frac{1}{\sigma_s}\nabla_x\rho \right) + \sigma_a\rho= 0\,,\\
\rho(0,x)=f^I(x)   \,, \\
\rho|_{\partial\Omega} = \phi(t,x) \,.
\end{cases}
\end{equation} 
Here $C$ is a time dependent constant. 
\end{theorem}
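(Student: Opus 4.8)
\emph{Proof proposal.} The plan is to run a Hilbert (Chapman--Enskog) expansion in the Knudsen number and then justify it by a uniform-in-$\Kn$ energy estimate on the remainder. Write
\[
f = f_0 + \Kn\, f_1 + \Kn^2 f_2 + \cdots ,
\]
insert this ansatz into \eqref{eqn:RTE_s}, and match like powers of $\Kn$, using the two structural facts about $\mathcal{L}$ recorded after \eqref{eqn:collision}: $\int \mathcal{L}g\,\mathrm{d}v = 0$, and $\mathcal{L}$ acts as $-\mathrm{Id}$ on the (normalized) mean-zero subspace, with null space $\NullL$ the velocity-independent functions.

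At order $\Kn^{-1}$ one gets $\sigma_s\mathcal{L}f_0 = 0$; since $\sigma_s>0$ this forces $f_0 = \rho(t,x)\in\NullL$. At order $\Kn^0$, $v\cdot\nabla_x f_0 = \sigma_s\mathcal{L}f_1$; the right-hand side has zero velocity average and $\int_{\Sp^{n-1}} v\cdot\nabla_x\rho\,\mathrm{d}v = 0$ by the symmetry of the sphere, so this is solvable and gives $f_1 = -\tfrac{1}{\sigma_s}\, v\cdot\nabla_x\rho$ (modulo an element of $\NullL$ absorbed into $\rho$). At order $\Kn^1$, $\partial_t f_0 + v\cdot\nabla_x f_1 = \sigma_s\mathcal{L}f_2 - \sigma_a f_0$; integrating in $v$ kills the $\mathcal{L}f_2$ term by mass conservation, and $\int_{\Sp^{n-1}} v\otimes v\,\mathrm{d}v = c_n\,\mathrm{Id}$ yields
\[
\partial_t\rho - C\,\nabla_x\cdot\!\left(\tfrac{1}{\sigma_s}\nabla_x\rho\right) + \sigma_a\rho = 0 ,
\]
i.e.\ exactly \eqref{eqn:diff}, with $C$ a constant depending only on the dimension $n$ (the time-dependence in the statement reflects, as below, the loss of uniformity of the estimate near $t=0$ rather than a genuine $t$-dependence of the effective diffusion constant). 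For the data: since $f^I$ and $\phi$ are velocity independent they already lie in $\NullL$, so no initial layer is needed and $\rho$ inherits $\rho(0,\cdot)=f^I$ and $\rho|_{\partial\Omega}=\phi$; the only defect is that $f_1$ does not vanish on $\Gamma_-$, an $O(\Kn)$ mismatch repaired by a Knudsen boundary-layer corrector that does not affect the leading order.

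To upgrade the formal expansion to convergence, set $R_\Kn := f - \rho - \Kn f_1$ (plus the boundary corrector), derive the transport equation it satisfies with a source of size $O(\Kn)$ in a suitable norm, multiply by $R_\Kn$ and integrate over $\Omega\times\Sp^{n-1}$. The coercivity of $-\mathcal{L}$ on the orthogonal complement of $\NullL$, together with the favorable sign of the outgoing boundary flux and Gr\"onwall's inequality in $t$, then gives $\|f-\rho\|\to 0$ as $\Kn\to 0$. I expect the main obstacle to be making this energy estimate uniform in $\Kn$ up to the spatial boundary and near the initial time: one must control the incoming trace of $R_\Kn$, handle the grazing set $\{v\cdot n_x\approx 0\}$ where the transport term degenerates, and absorb the fact that the estimate deteriorates as $t\to 0$. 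An alternative, lighter route is to observe that this is precisely the diffusion limit already established in \cite{LK74, BSS84} and to invoke those results directly after adjusting for the present normalization and time-dependent coefficients.
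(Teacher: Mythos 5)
Your Hilbert expansion is exactly the paper's proof: the same order-by-order matching ($\sigma_s\mathcal{L}f_0=0$ giving $f_0=\rho(t,x)$, then $f_1=-\frac{1}{\sigma_s}v\cdot\nabla_x\rho$ using invertibility of $\mathcal{L}$ on $\NullL^\perp$, then integrating the next order in $v$ to kill $\mathcal{L}f_2$ and produce \eqref{eqn:diff}, with $C$ a dimensional constant and the velocity-independent data avoiding initial/boundary layers at leading order). The additional remainder estimate via coercivity, boundary flux sign and Gr\"onwall that you sketch goes beyond the paper, whose proof stops at the formal expansion, so your proposal is correct and, in its essential content, the same argument.
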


\begin{proof}
The proof follows standard asymptotic expansion. In the zero limit of $\Kn$, the distribution converges to the local equilibrium, and by applying the standard asymptotic expansion technique, we write:
\begin{equation}
f_\text{in} = f_0 + \Kn f_1 + \Kn^2 f_2 +\cdots\,.
\end{equation}
Inserting the expansion in the equation~\eqref{eqn:RTE_s} and equate like powers of $\Kn$:
\begin{itemize}
\item[$\mathcal{O}(1)$] $\mathcal{L}f_0 = 0$. This immediately indicates that $f_0\in\NullL$. With the form given in~\eqref{eqn:collision}, $\NullL$ consists functions that are constants in $v$ domain, and thus $f_0(t,x,v)=\rho(t,x)$.
\item[$\mathcal{O}(\Kn)$] $v\cdot\nabla_xf_0 = \sigma_s\mathcal{L}f_1$. This indicates that $f_1 = \mathcal{L}^{-1}\frac{v\cdot\nabla_xf_0}{\sigma_s}$. Notice that $\mathcal{L}$ is invertible on $\NullL^\perp$, and consider the form of $\mathcal{L}$ in~\eqref{eqn:collision}, then $\NullL^\perp = \{f:\int f\rd{v} = 0\}$ and $\frac{v\cdot\nabla_xf_0}{\sigma_s}\in\NullL^\perp $, therefore $f_1 = \mathcal{L}^{-1}\frac{v\cdot\nabla_xf_0}{\sigma_s}=-\frac{v\cdot\nabla_xf_0}{\sigma_s}$.
\item[$\mathcal{O}(\Kn^2)$] $\partial_t f_0 +v\cdot\nabla_xf_1 = \sigma_s\mathcal{L}f_2 - \sigma_af_0$. Here we integrate the equation with respect to $v$. The second corrector $f_2$ will vanish and the left hand side becomes:
\begin{equation}
\partial_t \rho +\int v\cdot\nabla_x\left(-\frac{v}{\sigma_s}\cdot\nabla_x\rho\right) \rd{v}= -\sigma_a\rho \quad\Rightarrow\quad \partial_t\rho -C\nabla_x\cdot\left(\frac{1}{\sigma_s}\nabla_x\rho\right) = -\sigma_a\rho\,.
\end{equation}
\end{itemize}

Integrating $\int v\cdot v\rd{v}$ out, we obtains the diffusion limit and conclude the theorem. The constant $C$ depends on the dimension of the velocity space.
\end{proof}

\begin{remark} \label{remark:BL00}
We comment that the initial and boundary conditions in the above theorem are relatively strict: they both need to be independent of $v$. If not, one needs to introduce the initial layer and the boundary layer to damp out the nonhomogeneities. More specifically, we write:
\begin{equation}
f(t,x,v) \approx f_\text{il}(t,x,v) + f_\text{bl}(t,x,v) + f_\text{int}(t,x,v)\,,
\end{equation}
where $f_\text{int}$ stands for the interior solution and writes:
\begin{equation*}
f_\text{int}(t,x,v) = \theta(t,x)-\Kn \mathcal{L}^{-1}(v)\partial_x\theta(t,x)\,,
\end{equation*}
with $\theta$ satisfying the diffusion equation~\eqref{eqn:diff}. $f_\text{il}$ is the initial layer and is governed by:
\begin{equation*}
\partial_\tau f_\text{il} - \mathcal{L}f_\text{il} = 0\,,
\end{equation*}
where $\tau = t/\Kn^2$ is the rescaled time. With appropriate initial data, $f_\text{il}$ damps to $0$ exponentially fast in $\tau$ and thus $f_\text{il}\sim e^{-t/\Kn^2}\sim 0$ for finite $t$ with $\Kn\to 0$. $f_\text{bl}$ is the boundary layer. At each point on the boundary, $x_0\in\partial\Omega$, $f_\text{bl}$ satisfies:
\begin{equation*}
v\partial_zf_\text{bl}+\mathcal{L}f_\text{bl}= 0\,,
\end{equation*}
where $z$ is the rescaled spatial coordinate around $x_0$: $z=-\frac{(x-x_0)\cdot n_x}{\Kn}$ with $n_x$ being the normal direction pointing out of $\Omega$ at $x_0$. It has been shown that $f_\text{bl}$ exponentially decays to a constant in $z$. This constant is termed the extrapolation length, and is uniquely determined by the boundary data around $x_0$. We denote it $\phi(t,x_0)$. This means that for $x$ adjacent to $x_0$, in the zero limit of $\Kn$, $|f_\text{bl}-\phi(t,x_0)|\sim e^{-|x-x_0|/\Kn}\sim 0$. We typically subtract this constant from $f_\text{bl}$ and set it as the Dirichlet boundary condition for $\theta$, and thus $f_\text{bl}\sim 0$ everywhere.
\end{remark}

%%%%%%%%%%%%%%%%%%%%%%%%%%%%%%%%%%%%%%%%%%%%%%%%%%%%%%
\subsection{Recover absorption coefficient $\sigma_a$}
In this section we assume that the scattering coefficient is known and aim to recover the absorption coefficient. Without loss of generality, we let $\sigma_s \equiv 1$.\subsubsection{Inverse problem setup}
We first rewrite~\eqref{eqn:RTE_s} into:
\begin{equation}
\begin{cases}\label{eqn:abs}
\Kn \partial_t f+v\cdot \nabla_x f= \frac{1}{\Kn}\mathcal{L}f-\Kn \sigma_a f\quad \text{in}\quad (0,T)\times \Omega \times \mathbb{S}^{n-1}\,,\\
f(0,x,v)=0 \quad \text{on}\quad \{t=0\}\times \Omega\times  \mathbb{S}^{n-1}\,,\\
f(t,x,v)=\phi(t,x,v)\quad \text{on} \quad (0,T)\times \Gamma_-\,.
\end{cases}
\end{equation}
The solution to the above equation, denoted by $f(t,x,v;\phi)$, models the number density of photons with certain inflow $\phi$. In experiment, time-dependent velocity-averaged data $m(t,x):=\int_{\mathbb{S}^{n-1}}v\cdot n(x)f(t,x,v)|_{\Gamma_+}\rd{v}$ is collected on the out-flow boundary $\Gamma_+$. Therefore, we can define the albedo operator as:
\begin{equation*}
\mathcal{A}(\sigma_a): \quad \phi(t,x,v)\rightarrow m(t,x)=\int_{\mathbb{S}^{n-1}}v\cdot n(x)f(t,x,v) |_{\Gamma_+}\rd{v}\,.
\end{equation*}
Notice that $\mathcal{A}$ nonlinearly depends on $\sigma_a$ via the solution $f(t,x,v;\phi)$. In an effort to study the property of $\mathcal{A}$, we first derive a linearized version of it following the procedure outlined in \cite{Ren_review}. 

Suppose a priori information about the absorption coefficient is known in the sense that $\sigma_a$ can be considered as a small perturbation around a background state $\sigma_{a0}(x)$, i.e., 
\begin{equation*}
\sigma_a(x)=\sigma_{a0}(x)+\tilde{\sigma}_a(x)\quad \text{with}\quad |\tilde{\sigma}_a| \ll |\sigma_a|\,,\quad \text{a.s}\,,
\end{equation*}
then a linearized problem with background state $\sigma_{a0}$ and same initial and boundary data can be defined as
\begin{equation}\label{eqn:abs0}
\begin{cases}
\Kn \partial_t f_0+v\cdot \nabla_x f_0= \frac{1}{\Kn}\mathcal{L}f_0-\Kn \sigma_{a0} f_0\quad \text{in}\quad (0,T)\times \Omega \times \mathbb{S}^{n-1}\,,\\
f_0(0,x,v)=0 \quad \text{on}\quad \{t=0\}\times \Omega\times  \mathbb{S}^{n-1}\,,\\
f_0(t,x,v)=\phi(t,x,v)\quad \text{on} \quad (0,T)\times \Gamma_-\,.
\end{cases}
\end{equation}
Comparing \eqref{eqn:abs} and \eqref{eqn:abs0}, we define the residue $\tilde{f}=f-f_0$, then it solves, to the leading order:
\begin{equation}\label{eqn:abs_res}
\begin{cases}
\Kn \partial_t \tilde{f}+v\cdot \nabla_x \tilde{f}= \frac{1}{\Kn}\mathcal{L}\tilde{f}-\Kn \sigma_{a0} \tilde{f}-\Kn\tilde{\sigma}_af_0\quad \text{in}\quad (0,T)\times \Omega \times \mathbb{S}^{n-1}\,,\\
\tilde{f}(0,x,v)=0 \quad \text{on}\quad \{t=0\}\times \Omega\times  \mathbb{S}^{n-1}\,,\\
\tilde{f}(t,x,v)=0\quad \text{on} \quad (0,T)\times \Gamma_-\,,
\end{cases}
\end{equation}
which is obtained by subtracting \eqref{eqn:abs0} from \eqref{eqn:abs} with the higher order term $\tilde{\sigma}_a\tilde{f}$ omitted. Notice here that both the linearized solution $f_0$ and the residue $\tilde{f}$ are implicitly dependent on the incoming data $\phi$. We then introduced an adjoint problem of \eqref{eqn:abs0} and assign a Dirac delta function $\delta(\tau,y)$ at the boundary $ (0,T)\times \Gamma_+$:
\begin{equation}\label{eqn:abs_g}
\begin{cases}
-\Kn \partial_tg-v\cdot \nabla_x g= \frac{1}{\Kn}\mathcal{L}g-\Kn \sigma_{a0} g\quad \text{in}\quad (0,T)\times \Omega \times \mathbb{S}^{n-1}\,,\\
g(T,x,v)=0 \quad \text{on}\quad \{t=T\}\times \Omega\times  \mathbb{S}^{n-1}\,,\\
g(t,x,v)=\delta(\tau,y)\quad \text{on} \quad (0,T)\times \Gamma_+\,.
\end{cases}
\end{equation}
The solution is denoted by $g(t,x,v;\tau,y)$. Multiplying \eqref{eqn:abs_res} with $g$, \eqref{eqn:abs_g} with $f_0$, integrating over $(0,T)\times\Omega\times\mathbb{S}^{n-1}$ and then subtracting them, we get:
\begin{equation}\label{eqn:abs_IBP}
\int_{\Gamma_+(y)}\tilde{f}(\tau,y,v)n(y)\cdot v \rd{v}=-\Kn \int_\Omega \tilde{\sigma}_a(x) \int_{\mathbb{S}^{n-1}}\int_0^T f_0(t,x,v;\phi) g(t,x,v;\tau,y)\rd{t}\rd{v} \rd{x}\,.
\end{equation}
We denote the LHS of \eqref{eqn:abs_IBP} by $b(\tau,y,\phi)$, then according to the definition of $\tilde{f}$, it is simply
\begin{equation}\label{eqn:abs_b}
b(\tau,y,\phi):= \int_{\Gamma_+(y)}f(\tau,y,v)n(y)\cdot v \rd{v}-\int_{\Gamma_+(y)}f_0(\tau,y,v)n(y)\cdot v \rd{v} \,,
\end{equation}
with the first term being the measurement from experiments, and the second term computed from equation~\eqref{eqn:abs0}. This term, therefore is known ahead of time. The RHS of \eqref{eqn:abs_IBP} defines a linear mapping of $\tilde{\sigma}_a$. Let us denote
\begin{equation}\label{eqn:abs_gamma}
\gamma_{\Kn}(x;\tau,y,\phi):=-\Kn \int_{\mathbb{S}^{n-1}}\int_0^T f_0(t,x,v;\phi) g(t,x,v;\tau,y)\rd{t}\rd{v}, 
\end{equation} 
then \eqref{eqn:abs_IBP} defines a family of linear mapping from $\gamma_{\Kn}$ to the data on the LHS, parametrized by $(\tau,y,\phi)$:
\begin{equation}\label{eqn:abs_linear}
\int_\Omega \tilde{\sigma}_a(x)\gamma_{\Kn}(x;\tau,y,\phi)\rd{x} = b(\tau,y,\phi)\,.
\end{equation}
Therefore, \eqref{eqn:abs_linear} defines a linearized albedo operator, from which $\tilde{\sigma}_a$ can be obtained via solving a system of linear equations. 

\begin{remark}\label{rmk:wellposedness_ab}
Equation~\eqref{eqn:abs_linear} is a first type Fredholm operator, and it holds true for all parameter choices of $\tau$, $y$ and $\phi$. The study on the wellposedness simply relies on the space expanded by $\{\gamma_\Kn\}$. Suppose we look for $\tilde{\sigma}_a\in L_p(\rd{x})$, then the uniqueness is guaranteed if $\{\gamma_\Kn\}$ expands $L_q$ space (with $\frac{1}{p} + \frac{1}{q} = 1$). There has been many studies on the topic and is not the main goal of the current paper. The wellposedness amounts to analyze the ``conditioning" of $\gamma_\Kn$. It is closely related to studying its ``singular values", as will be explained in better details below.
\end{remark}

\subsubsection{Ill-conditioning in the diffusion limit}
Given the linearized albedo operator defined in \eqref{eqn:abs_linear}, studying the stability of recovering $\tilde{\sigma}_a$ boils down to examining the property of the Fredholm operator of the first kind defined there. In this section, we intend to explore its conditioning with respect to $\Kn$. More precisely, given a family of input-measurement pairs $(\phi(t,x,v), ~m(\tau, y, \phi))$, where $(t,x,v)\in (0,T)\times \Gamma_-$ and $m(\tau,y,\phi)=\int_{\mathbb{S}^{n-1}}f(\tau,y,v)n(y)\cdot v\rd{v}$, we can explicitly compute $\gamma_{\Kn}(x)$ and $b(\tau,y,\phi)$ defined in \eqref{eqn:abs_gamma} and \eqref{eqn:abs_b}, and study their dependence on $\Kn$ so as to get a sensitivity in recovering $\tilde{\sigma}_a$ with respect to $\Kn$.

In this regard, we first introduce a distinguishability coefficient to quantify the perturbation of $\tilde{\sigma}_a$ when a $\delta$-error is allowed for $b(\tau,y,\phi)$.
\begin{definition}
Consider linear equations \eqref{eqn:abs_linear} and $\gamma_{\Kn}$ defined in \eqref{eqn:abs_gamma} and $b(\tau,y,\phi)$ defined in \eqref{eqn:abs_b}, we defined the distinguishability coefficient as
\begin{equation} \label{eqn:kappa}
\kappa_a:=\sup_{\sigma_a\in\Gamma_\delta} \frac{\|\sigma_a-\tilde{\sigma}_a\|_{L^\infty(\rd x)}}{ \|\tilde{\sigma}_a\|_{L^\infty(\rd x)}} \,,
\end{equation}
where 
\[
\Gamma_\delta=\{ \sigma_a: \sup_{\substack{\forall \|\phi\|_{L^\infty(\Gamma_-)}\leq 1,\\ \forall y\in \partial\Omega, ~ \tau\in [0,T]}}|\langle\gamma_\Kn\,,\sigma_a\rangle_{L^2(\rd{x})} - b(\tau, y,\phi) | \leq \delta  \}\,,
\]
and $\tilde{\sigma}_a$ is the exact solution to \eqref{eqn:abs_linear}. 
\end{definition}
Here $\Gamma_\delta$ consists of all possible solutions to \eqref{eqn:abs_linear} within $\delta$-tolerance, and the distinguishability coefficient $\kappa$ quantifies supremum of relative error over $\Gamma_\delta$. Therefore, in practice small $\kappa$ is desired. However, this is not the case when $\Kn$ is small, as will be shown in the following theorem: small $\kappa$ leads to very bad distinguishability.

\begin{theorem}
For a family of linear equations defined in \eqref{eqn:abs_linear} and an error tolerance $\delta>0$ on the measurement, the distinguishability coefficient satisfies
\[
\kappa_a = \mathcal{O}\left(\frac{\delta}{\Kn} \right)\quad \text{when}\quad \Kn \ll 1\,.
\]
\end{theorem}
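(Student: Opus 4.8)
The plan is to trace the $\Kn$-dependence through the two quantities that define the linearized inverse problem, namely $\gamma_\Kn(x;\tau,y,\phi)$ in \eqref{eqn:abs_gamma} and $b(\tau,y,\phi)$ in \eqref{eqn:abs_b}, and then feed these estimates into the definition \eqref{eqn:kappa} of $\kappa_a$. First I would establish a priori bounds on the linearized forward solution $f_0$ solving \eqref{eqn:abs0} and on the adjoint solution $g$ solving \eqref{eqn:abs_g}: with $\|\phi\|_{L^\infty(\Gamma_-)}\le 1$, a maximum-principle / energy estimate for the rescaled RTE gives $\|f_0\|_{L^\infty}\le C$ uniformly in $\Kn$, and similarly the adjoint solution with the $\delta$-source on $\Gamma_+$ remains controlled. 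Crucially, by Theorem~\ref{thm:diffusion} (and Remark~\ref{remark:BL00} for the layers), as $\Kn\to 0$ both $f_0$ and $g$ converge to the corresponding solutions of the limiting diffusion/adjoint-diffusion problems, which are $O(1)$ and, generically, bounded away from zero on the bulk of $\Omega$. Hence the product $\int_{\Sp^{n-1}}\int_0^T f_0\,g\,\rd t\,\rd v$ is $O(1)$, and the prefactor $\Kn$ in \eqref{eqn:abs_gamma} forces
\[
\|\gamma_\Kn(\cdot;\tau,y,\phi)\|_{L^2(\rd x)} = \mathcal{O}(\Kn)\qquad\text{uniformly in }\phi,\tau,y .
\]

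Next I would argue the reverse direction for the error tolerance: the condition defining $\Gamma_\delta$ says $|\langle\gamma_\Kn,\sigma_a\rangle - b|\le\delta$ for all admissible $(\phi,\tau,y)$, and since $\tilde\sigma_a$ is the exact solution, any $\sigma_a\in\Gamma_\delta$ satisfies $|\langle\gamma_\Kn,\sigma_a-\tilde\sigma_a\rangle|\le\delta$. The key point is that $\gamma_\Kn$ cannot resolve components of $\sigma_a-\tilde\sigma_a$ any finer than its own size allows: because $\|\gamma_\Kn\|_{L^2}=\mathcal{O}(\Kn)$, a perturbation $h=\sigma_a-\tilde\sigma_a$ aligned with $\gamma_\Kn/\|\gamma_\Kn\|$ produces only $|\langle\gamma_\Kn,h\rangle|\approx \|\gamma_\Kn\|\,\|h\|=\mathcal{O}(\Kn)\|h\|_{L^\infty}$, so one may take $\|h\|_{L^\infty}$ as large as $\mathcal{O}(\delta/\Kn)$ while still respecting the $\delta$-constraint. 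This gives the lower bound $\kappa_a \gtrsim \delta/\Kn$. For the matching upper bound I would show that $\gamma_\Kn/\Kn$ has $O(1)$-controlled (from above and below, generically) action, so that $|\langle\gamma_\Kn,h\rangle|\ge c\,\Kn\,\|h\|_{L^\infty}$ cannot hold for all admissible parameters unless $\|h\|_{L^\infty}=\mathcal{O}(\delta/\Kn)$; combining the two bounds yields $\kappa_a=\mathcal{O}(\delta/\Kn)$.

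In carrying this out, the steps in order are: (i) uniform-in-$\Kn$ well-posedness and $L^\infty$ bounds for $f_0$ and $g$ with the given data; (ii) the diffusion-limit identification of $f_0\to\rho_0$, $g\to g_0$, and the observation that these limits are $O(1)$ and do not degenerate, so that $\int\int f_0 g\,\rd t\,\rd v$ stays bounded above and below on a set of parameters of positive measure; (iii) the resulting two-sided estimate $\|\gamma_\Kn\|_{L^2}\sim\Kn$; (iv) plugging into the definition of $\Gamma_\delta$ and $\kappa_a$ to extract both the lower and the upper bound of order $\delta/\Kn$.

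The main obstacle I anticipate is step (ii), specifically the lower bound: proving that $\int_{\Sp^{n-1}}\int_0^T f_0\,g\,\rd t\,\rd v$ stays bounded \emph{below} by an $O(1)$ quantity (for suitably chosen $\phi$ and $(\tau,y)$), rather than merely bounded above. This requires ruling out cancellation in the time integral of the product of the forward and adjoint solutions and showing the diffusion limits $\rho_0,g_0$ are simultaneously nonzero on a nondegenerate region — essentially a non-vanishing/controllability statement for the limiting heat equation and its adjoint, together with a uniform convergence rate (with the initial and boundary layers of Remark~\ref{remark:BL00} contributing only $o(1)$ away from $t=0$ and $\partial\Omega$). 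The uniformity over the admissible class $\{\phi:\|\phi\|_{L^\infty(\Gamma_-)}\le 1\}$ and over $(\tau,y)$ in the supremum defining $\Gamma_\delta$ is the delicate bookkeeping that makes this more than a formal scaling argument.
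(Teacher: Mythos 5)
Your core mechanism is the same as the paper's: expand $f_0$ and $g$ via the diffusion limit of Theorem~\ref{thm:diffusion} (with the layers of Remark~\ref{remark:BL00}), conclude that $\gamma_\Kn$ in \eqref{eqn:abs_gamma} is of size $\Kn$ times an $\Kn$-independent quantity (the paper computes it explicitly as $-\Kn\int_0^T\rho_f\rho_g\,\rd t$ in the interior, with the $\mathcal{O}(\Kn^2)$ cross terms dropping out by odd symmetry in $v$), and then feed this into the constraint defining $\Gamma_\delta$ to see that perturbations of size $\delta/\Kn$ are consistent with a $\delta$-tolerance. Two remarks on where you diverge. First, the paper does not estimate $\|\gamma_\Kn\|_{L^2}$ globally; it avoids the boundary-layer region (where the interior expansion is invalid, even though $\gamma_\Kn$ is still $\mathcal{O}(\Kn)$ there) simply by restricting to perturbations $c=\sigma_a-\tilde\sigma_a$ that vanish inside the layer — a cheaper device than your uniform $L^\infty$ bounds on $f_0$ and $g$, which are in any case delicate since $g$ has Dirac boundary data. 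Second, and more importantly, your promised ``matching upper bound'' ($|\langle\gamma_\Kn,h\rangle|\ge c\,\Kn\,\|h\|_{L^\infty}$ for suitable data, hence $\kappa_a\lesssim\delta/\Kn$) is not in the paper and would be very hard to justify: it amounts to Lipschitz-type stability for the linearized diffusion-limit (Calder\'on-type) inverse problem, and the paper explicitly remarks after the proof that it cannot rule out even worse behavior of order $\delta/\Kn^2$, precisely because a lower bound on the action of $\int_0^T\rho_f\rho_g\,\rd t$ over all admissible data is unavailable. The theorem's content, as proved, is the one-sided degradation statement (perturbations as large as $\delta/\Kn$ lie in $\Gamma_\delta$), which your step (iii)--(iv) lower-bound argument already delivers; the obstacle you flag in step (ii) is real, but it blocks only the two-sided refinement you added, not the result the paper actually establishes.
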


\begin{proof}
For any $\sigma_a\in \Gamma_\delta$, define $c=\sigma_a-\tilde{\sigma}_a$, then we have
\begin{equation}\label{eqn:abs_c}
\left| \int_\Omega \gamma_{\Kn}(x)c(x)\rd{x} \right|\leq \delta\,.
\end{equation}
When $\Kn \ll 1$, from Theorem \ref{thm:diffusion}, $f_0(t,x,v)$ can be decomposed into two part 
\[
f_0(t,x,v)=f_\text{int}(t,x,v)+f_\text{bl}(t,x,v)\,,
\]
where $f_\text{bl}(t,x,v)$ encodes the boundary layer supported near the boundary with $\mathcal{O}(\Kn)$ width and $f_\text{int}$ is the interior solution, and it approaches to its diffusion limit $\rho_f(t,x)$ which satisfies \eqref{eqn:diff} with zero initial data and suitable boundary condition. Specifically, $f_\text{int}(t,x,v)$ can be expanded as:
\begin{equation}\label{eqn:abs_exf}
f_\text{int}(t,x,v) = \rho_f(t,x) -\Kn v\cdot \nabla_x \rho_f(t,x) +\mathcal{O}(\Kn^2)\,,
\end{equation}
where $\rho_f$ solves
\begin{equation}\label{eqn:rho_f}
\begin{cases}
\partial_t \rho_f  = C \Delta_x \rho_f -\sigma_a \rho_f     \quad \text{in}\quad (0,T)\times \Omega \,,\\
\rho_f(0,x)=0 \quad \text{on}\quad \{t=0\}\times \Omega  \,,\\
\rho_f(t,x)=\eta_\phi(t,x)  \quad \text{on} \quad (0,T)\times \partial \Omega\,.
\end{cases}
\end{equation}
Here the boundary value $\eta_\phi(x)$ is computed from $\phi(t,x,v)$ through the boundary layer analysis. (Details are provided in Remark~\ref{remark:BL00}). 

Likewise, $g$ admits the same decomposition that separates the interior part from the boundary part:
\[
g_0(t,x,v)=g_\text{int}(t,x,v)+g_\text{bl}(t,x,v)\,,
\]
and $g_\text{int}(t,x,v)$ has the following expansion: 
\begin{equation*}
g_\text{int}(t,x,v) = \rho_g(t,x) -\Kn v\cdot \nabla_x \rho_g(t,x) +\mathcal{O}(\Kn^2)\,,
\end{equation*}
with $\rho_g$ satisfying 
\begin{equation}\label{eqn:rho_f}
\begin{cases}
-\partial_t \rho_g  = C \Delta_x \rho_g -\sigma_a \rho_g     \quad \text{in}\quad (0,T)\times \Omega \,,\\
\rho_g(T,x)=0 \quad \text{on}\quad \{t=T\}\times \Omega \,,\\
\rho_f(t,x)= \eta_\delta (t,x)  \quad \text{on} \quad (0,T)\times \partial \Omega\,.
\end{cases}
\end{equation}

We plug the expansion of $f_0$ and $g$ in the definition of $\gamma_\Kn$, then in the interior away from the layer:
\begin{align*}
\gamma_{\Kn}(x;\tau,y,\phi)&:=-\Kn \int_{\mathbb{S}^{n-1}}\int_0^T f_0(t,x,v;\phi) g(t,x,v;\tau,y)\rd{t}\rd{v}\,,\\
& = -\Kn \int_0^T \rho_f(t,x)\rho_g(t,x)\rd{t} +\mathcal{O}(\Kn^3)\,.
\end{align*} 
Simplification is not available inside the layer. In the derivation, the $\mathcal{O}(\Kn^2)$ terms are:
\begin{equation*}
\int_0^T\rho_f\int_\mathbb{S}^{n-1} v\cdot\nabla_x\rho_g\rd{v}\rd{t} + \int_0^T\rho_g\int_\mathbb{S}^{n-1} v\cdot\nabla_x\rho_f\rd{v}\rd{t}\,,
\end{equation*}
and they disappear since the integrands are odd functions. Then inserting this $\gamma_{\Kn}$ back into \eqref{eqn:abs_c} we have
\begin{equation*}
\int_\Omega c(x)\gamma_{\Kn}(x)\rd{x}=-\Kn \int_\text{int} c(x)\int_0^T\rho_f(t,x)\rho_g(t,x)\rd{t}\rd{x} +\int_\text{bl} c(x)\int_0^T\gamma_{\Kn}(x)\rd{t}\rd{x} +\mathcal{O}(\Kn^3)
\end{equation*} 
which readily implies that $\kappa=\mathcal{O}(\frac{\delta}{\Kn})$ by choosing $c=0$ inside the layer.
\end{proof}
We note that $\rho_f$ and $\rho_g$ are solutions to the heat equation, and have no dependence on $\Kn$. In the time-independent case~\cite{ChenLiWang}, we compensate it by showing that the multiplication of $\rho_f$ and $\rho_g$, the solutions to two elliptic equations are of low rank, which easily produces one more $\Kn$. It is no longer the case here: we cannot prove the term $\int_0^T\rho_f\rho_g\rd{t}$ being low rank in $L_2$, and thus it is hard to obtain $\mathcal{O}(\delta/\Kn^2)$.
 
\begin{remark}
We would like to point out that the definition of distinguishability coefficient \eqref{eqn:kappa} is a ``continuous" analog of the condition number in the discrete setting. In fact, if we discretize equation~\eqref{eqn:abs_linear} in $x$ and write it in a matrix form, we get:
\begin{equation*}
\mathsf{A}\cdot\tilde{\sigma}_a^{\dis} = \mathsf{b}\,,
\end{equation*}
where each row of $\Amat$ is $\gamma_\Kn$ evaluated at all discrete point with one particular $\tau$, $y$ and $\phi$ selected:
\begin{equation*}
\mathsf{A}_{ij} = \gamma_\Kn(x_j;\tau_i,y_i,\phi_i)\,,\quad\text{and}\quad \mathsf{b}_i = b(\tau_i,y_i,\phi_i)\,.
\end{equation*}
Perform the singular value decomposition of $\mathsf{A}$
\begin{equation*}
\mathsf{A} = \mathsf{U}\cdot\Sigma\cdot\mathsf{V}^T = \sum_{i=1}^N\lambda_i \mathsf{u}_i \mathsf{v}^T_i\,,\quad \lambda_1\geq\lambda_2\geq\cdots\geq\lambda_N\,,
\end{equation*}
with $\lambda_i$ being the singular values and $\mathsf{u}_i$ and $\mathsf{v}_i$ are column vectors, then 
\[
\tilde{\sigma}_a^\dis = \mathsf{V} \cdot \Sigma^{-1} \cdot \mathsf{U}^T  \mathsf{b}\,.
\]
Similarly, a variation of $\tilde{\sigma}_a^\dis$, denoted as $\sigma_a^\dis$, satisfies 
\[
\sigma_a^\dis = \mathsf{V} \cdot \Sigma^{-1} \cdot \mathsf{U}^T  (\mathsf{b} + \mathsf{b}^\delta)\,.
\]
Then the equivalent definition of $\kappa$ here is:
\[
\kappa_\mathsf{A} = \max_{\mathsf{b}^\delta: \|\mathsf{b}^\delta\|_{\infty}<\delta} \frac{\|\sum \frac{1}{\lambda_i} \mathsf{v}_i \mathsf{u}_i^T \mathsf{b}^\delta  \|_\infty}{\|\sum \frac{1}{\lambda_i} \mathsf{v}_i  \mathsf{u}_i^T \mathsf{b} \|_\infty} \,.
\]
Assuming $\mathsf{b}$ is a fixed vector, and let the denominator being $\mathcal{O}(1)$, then the biggest number is achieved if $\mathsf{b}^\delta$ is aligned with $\mathsf{u}_N$ so that $\kappa_\mathsf{A} =\frac{\delta}{\lambda_N}$. Or $\kappa_\mathsf{A}$ may implicitly depend on $\mathsf{b}$ as well, and if the definition is replace by:
\[
\kappa_\mathsf{A} = \max_{\mathsf{b}^\delta, \mathsf{b}: \|\mathsf{b}^\delta\|_{\infty}<\delta \|\mathsf{b}\|_\infty} \frac{\|\sum \frac{1}{\lambda_i} \mathsf{v}_i \mathsf{u}_i^T \mathsf{b}^\delta  \|_\infty}{\|\sum \frac{1}{\lambda_i} \mathsf{v}_i  \mathsf{u}_i^T \mathsf{b} \|_\infty} \,,
\]
then the maximum is achieved by the condition number: $\kappa_\mathsf{A} = \frac{\delta\lambda_1}{\lambda_N}$, by aligning $\mathsf{b}$ with $\mathsf{u}_1$ and $\mathsf{b}^\delta$ with $\mathsf{u}_N$.
\end{remark}

\subsection{Recover scattering coefficient $\sigma_s$}
To recover $\sigma_s$ we follow the same route: first set up the inverse problem through a linearization and show that the stability degrades as $\Kn\to 0$. Without loss of generality, we set $\sigma_a = 1$.
\subsubsection{Inverse problem setup}
To set up the inverse problem we first recall the forward problem:
\begin{equation}
\begin{cases}\label{eqn:sca}
\Kn \partial_t f+v\cdot \nabla_x f= \frac{1}{\Kn}\sigma_s\mathcal{L}f-\Kn f\quad \text{in}\quad (0,T)\times \Omega \times \mathbb{S}^{n-1}\,,\\
f(0,x,v) = 0 \quad \text{on}\quad \{t=0\}\times \Omega\times  \mathbb{S}^{n-1}\,,\\
f(t,x,v)=\phi(t,x,v)\quad \text{on} \quad (0,T)\times \Gamma_-\,,
\end{cases}
\end{equation}
then a similar linearization procedure can be conducted as follows. Assume that $\sigma_{s}(x)$ can be written as a superposition of a known background $\sigma_{s0}(x)$ and a perturbation $\tilde{\sigma}_s(x)$ from the background, i.e., 
\begin{equation*}
\sigma_s(x)=\tilde{\sigma}_s(x)+\sigma_{s0}(x)\quad\text{with}\quad |\tilde{\sigma}_s| \ll |\sigma_s| \,,\quad\text{a.s.}\,,
\end{equation*}
then the background solution $f_0$ satisfies equation:
\begin{equation}
\begin{cases}\label{eqn:sca0}
\Kn \partial_t f_0+v\cdot \nabla_x f_0= \frac{1}{\Kn}\sigma_{s0}\mathcal{L}f_0-\Kn f_0\quad \text{in}\quad (0,T)\times \Omega \times \mathbb{S}^{n-1}\,,\\
f_0(0,x,v)= 0  \quad \text{on}\quad \{t=0\}\times \Omega\times  \mathbb{S}^{n-1}\,,\\
f_0(t,x,v)=\phi(t,x,v)\quad \text{on} \quad (0,T)\times \Gamma_-\,.
\end{cases}
\end{equation}
The residue $\tilde{f} := f - f_0$ then solves:
\begin{equation}
\begin{cases}\label{eqn:sca_res}
\Kn \partial_t \tilde{f}+v\cdot \nabla_x \tilde{f}= \frac{1}{\Kn}\sigma_{s0}\mathcal{L}\tilde{f}-\frac{1}{\Kn}\tilde{\sigma}_s\mathcal{L}f_0-\Kn \tilde{f}\quad \text{in}\quad (0,T)\times \Omega \times \mathbb{S}^{n-1}\,,\\
\tilde{f}(0,x,v)=0 \quad \text{on}\quad \{t=0\}\times \Omega\times  \mathbb{S}^{n-1}\,,\\
\tilde{f}(t,x,v)=0\quad \text{on} \quad (0,T)\times \Gamma_-\,.
\end{cases}
\end{equation}
Write the adjoint problem of \eqref{eqn:sca0} as:
\begin{equation}
\begin{cases}\label{eqn:sca_g}
-\Kn \partial_t g-v\cdot \nabla_x g= \frac{1}{\Kn}\sigma_{s0}\mathcal{L}g-\Kn g\quad \text{in}\quad (0,T)\times \Omega \times \mathbb{S}^{n-1}\,,\\
g(T,x,v)=0 \quad \text{on}\quad \{t=T\}\times \Omega\times  \mathbb{S}^{n-1}\,,\\
g(t,x,v)=\delta(\tau,y)\quad \text{on} \quad (0,T)\times \Gamma_+\,,
\end{cases}
\end{equation}
then multiply it with $\tilde{f}$ and subtract the product of \eqref{eqn:sca_res} with $g$, and integrate over $(0,T)\times \Omega \times \mathbb{S}^{n-1}$, we get  
\begin{equation}\label{eqn:sca_IBP}
\int_{\Gamma_+(y)}\tilde{f}(\tau,y,v)n(y)\cdot v \rd{v}=\frac{1}{\Kn} \int_\Omega\tilde{\sigma}_s(x) \int_{\mathbb{S}^{n-1}}\int_0^T g(t,x,v;\tau,y) \mathcal{L}f_0(t,x,v;\phi) \rd{t}\rd{v} \rd{x}\,.
\end{equation}
This equation prompts a linear equation for $\tilde{\sigma}_a$, that is, 
\begin{equation}\label{eqn:sca_linear}
\int_\Omega \tilde{\sigma}_s(x)\gamma_{\Kn}(x;\tau,y,\phi)\rd{x} = b(\tau,y,\phi) \,,
\end{equation}
where 
\begin{eqnarray*}
\gamma_{\Kn}(x;\tau,y,\phi)&:=&\frac{1}{\Kn}\int_{\mathbb{S}^{n-1}}\int_0^T g(t,x,v;\tau,y) \mathcal{L}f_0(t,x,v;\phi) \rd{t}\rd{v} \,, 
\\ &=& \frac{1}{\Kn} \int_0^T \average{g} \average{f} - \average{gf} dt \,,
\end{eqnarray*}
and 
\[
b(\tau,y,\phi):=\int_{\Gamma_+(y)}(f(\tau,y,v) - f_0(\tau, y, v))n(y)\cdot v \rd{v} \,.
\]
Here $\average{f} := \int_{\mathbb{S}^{n-1}} f(t,x,v) \rd v$.
Again $b(\tau,y,\phi)$ is the data at our disposal---the difference between the measured data and computed data, and we end up with a Fredholm operator of first kind with kernel $\gamma_\Kn(x)$. We study in the next section the ill-conditioning for this family of linear equations in the diffusion regime. 

%%%%%%%%%%%%%%%%%%%%%%%%%%%%%%%%%%%%%%%%%%%%%%%
\subsubsection{Ill-conditioning in the diffusion limit}
Similar to the previous case, when $\Kn$ decreases, the transport equation approaches a diffusion equation and thus recovering the scattering coefficient $\sigma_s$ is less stable. More precisely, we have the following theorem:
\begin{theorem}
For a family of linear equations defined in \eqref{eqn:sca_linear} and an error tolerance $\delta>0$ on the measurement, define the distinguishability coefficient as
\begin{equation} \label{eqn:kappa_s}
\kappa_s:=\sup_{\sigma_s\in\Gamma_\delta} \frac{\|\sigma_s-\tilde{\sigma}_s\|_{L^\infty(\rd x)}}{ \|\tilde{\sigma}_s\|_{L^\infty(\rd x)}} \,,
\end{equation}
where 
\[
\Gamma_\delta=\{ \sigma_s: \sup_{\substack{\forall \|\phi\|_{L^\infty(\Gamma_-)}\leq 1,\\ \forall y\in \partial\Omega, ~ \tau\in [0,T]}}|\langle\gamma_\Kn\,,\sigma_s\rangle_{L^2(\rd{x})} - b(\tau, y,\phi) | \leq \delta  \}\,,
\]
and $\tilde{\sigma}_s$ is the exact solution to \eqref{eqn:sca_linear}. Then we have
\[
\kappa_s :=\mathcal{O} \left(\frac{\delta}{\Kn} \right)\quad \text{when}\quad \Kn \ll 1\,.
\]
\end{theorem}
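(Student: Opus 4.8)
The plan is to run the same scheme as for $\sigma_a$: insert the diffusion-limit (Hilbert) expansions of the background solution $f_0$ and of the adjoint solution $g$ into the kernel $\gamma_\Kn$, exploit a cancellation that makes $\average{g}\average{f}-\average{gf}$ vanish to second order in $\Kn$, so that the explicit prefactor $1/\Kn$ only leaves an $\mathcal{O}(\Kn)$ kernel in the interior; then feed this into the constraint defining $\Gamma_\delta$ to conclude that $\|\sigma_s-\tilde{\sigma}_s\|_{L^\infty(\rd x)}=\mathcal{O}(\delta/\Kn)$. Recall that throughout this subsection $\sigma_a\equiv 1$ and $\sigma_s=\sigma_{s0}+\tilde{\sigma}_s$.

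First I would invoke Theorem~\ref{thm:diffusion} and Remark~\ref{remark:BL00} to decompose $f_0=f_\text{int}+f_\text{bl}$ and $g=g_\text{int}+g_\text{bl}$, with the boundary layers supported in an $\mathcal{O}(\Kn)$-neighborhood of $\partial\Omega$ and the interior parts admitting
\begin{equation*}
f_\text{int}=\rho_f-\Kn\,v\cdot\nabla_x\rho_f+\Kn^2 f_2+\mathcal{O}(\Kn^3)\,,\qquad g_\text{int}=\rho_g-\Kn\,v\cdot\nabla_x\rho_g+\Kn^2 g_2+\mathcal{O}(\Kn^3)\,,
\end{equation*}
where $\rho_f$ and $\rho_g$ solve, respectively, a forward and a backward heat equation of the form \eqref{eqn:diff} (with boundary data obtained from $\phi$ and from $\delta(\tau,y)$ through the boundary-layer analysis) and, crucially, carry no dependence on $\Kn$.

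Next I would compute $\average{g}\average{f}-\average{gf}$ in the interior. Since $\average{v}=0$, one has $\average{f}=\rho_f+\mathcal{O}(\Kn^2)$ and $\average{g}=\rho_g+\mathcal{O}(\Kn^2)$, so the $\mathcal{O}(\Kn)$ contribution is absent; moreover the $\mathcal{O}(\Kn^2)$ terms produced by the correctors $f_2,g_2$ enter $\average{g}\average{f}$ and $\average{gf}$ identically and cancel, leaving
\begin{equation*}
\average{g}\average{f}-\average{gf}=-\Kn^2\,\average{(v\cdot\nabla_x\rho_f)(v\cdot\nabla_x\rho_g)}+\mathcal{O}(\Kn^3)=-C\,\Kn^2\,\nabla_x\rho_f\cdot\nabla_x\rho_g+\mathcal{O}(\Kn^3)\,,
\end{equation*}
with $C$ the dimensional constant of \eqref{eqn:diff}. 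Hence, away from the layer,
\begin{equation*}
\gamma_\Kn(x;\tau,y,\phi)=-C\,\Kn\int_0^T\nabla_x\rho_f(t,x)\cdot\nabla_x\rho_g(t,x)\,\rd t+\mathcal{O}(\Kn^2)\,,
\end{equation*}
while inside the $\mathcal{O}(\Kn)$-wide layer no such simplification is available and one only retains $\gamma_\Kn=\mathcal{O}(1/\Kn)$. Finally, for any $\sigma_s\in\Gamma_\delta$ set $c=\sigma_s-\tilde{\sigma}_s$; subtracting the exact identity $\langle\gamma_\Kn,\tilde{\sigma}_s\rangle_{L^2(\rd x)}=b(\tau,y,\phi)$ from the $\delta$-constraint gives $|\langle\gamma_\Kn,c\rangle_{L^2(\rd x)}|\le\delta$ for every admissible $(\phi,y,\tau)$. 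Splitting $\Omega$ into its interior and the boundary layer and taking $c\equiv 0$ in the layer --- the choice that maximizes the relative error --- the interior estimate above yields $\big|\int_\text{int}\gamma_\Kn c\,\rd x\big|=\mathcal{O}(\Kn)\,\|c\|_{L^\infty(\rd x)}\le\delta$, so $\|c\|_{L^\infty(\rd x)}=\mathcal{O}(\delta/\Kn)$ and therefore $\kappa_s=\mathcal{O}(\delta/\Kn)$.

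The main obstacle is the one already noted after the $\sigma_a$ theorem: the interior profile $\int_0^T\nabla_x\rho_f\cdot\nabla_x\rho_g\,\rd t$ is built from solutions of a $\Kn$-independent parabolic problem and cannot be shown to be of low rank in $L^2(\rd x)$, so there is no further power of $\Kn$ to extract and the estimate cannot be sharpened to $\mathcal{O}(\delta/\Kn^2)$. The remaining technical points --- verifying that the $\Kn^2$-correctors $f_2,g_2$ really do cancel in $\average{g}\average{f}-\average{gf}$ (so that the $1/\Kn$ prefactor does not amplify the interior kernel), and controlling the Hilbert-expansion remainders uniformly in $(t,x,v)$ up to the layer so that the $\mathcal{O}(\Kn^2)$ error in $\gamma_\Kn$ is legitimate --- are routine given the parabolic regularity of $\rho_f$ and $\rho_g$.
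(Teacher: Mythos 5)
Your proposal is correct and follows essentially the same route as the paper's proof: interior/boundary-layer decomposition, Hilbert expansion of $f_0$ and the adjoint solution $g$, cancellation of the $\mathcal{O}(1)$, $\mathcal{O}(\Kn)$ and corrector terms in $\average{f}\average{g}-\average{fg}$ so that $(\gamma_\Kn)_\text{int}=\mathcal{O}(\Kn)$, and then testing against a perturbation $c$ chosen to vanish in the layer to conclude $\kappa_s=\mathcal{O}(\delta/\Kn)$. The only immaterial discrepancies are that in this subsection the background $\sigma_{s0}$ is not normalized to one, so the first-order correctors carry a factor $1/\sigma_{s0}$ (giving the paper's prefactor $C\Kn/\sigma_{s0}^2$), and the adjoint corrector enters with a $+$ sign; both affect only the sign and constant of the interior kernel, not the order in $\Kn$.
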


\begin{proof}
The proof again follows a boundary-interior decomposition and asymptotic expansion. First write $g$ and $f_0$ as:
\[
f_0=f_\text{bl}+f_\text{int},\quad g=g_\text{bl}+g_\text{int}
\]
where $f_\text{bl}$ and $g_\text{bl}$ are the boundary layer part,  and $f_\text{int}$ and $g_\text{int}$ are the interior part that admit the following expansion:
\begin{equation}\label{eqn:sca_expand}
\begin{aligned}
f_\text{int}& =\rho_f -\Kn\frac{v\cdot \nabla_x \rho_f}{\sigma_{s0}}+\Kn^2 f_2 \,,\\
g_\text{int}&=\rho_g +\Kn\frac{v\cdot \nabla_x \rho_g}{\sigma_{s0}}+\Kn^2 g_2\,.
\end{aligned}
\end{equation}
Here $\rho_f$ and $\rho_g$ satisfy the diffusion equations:
\begin{equation*}
\partial_t \rho_f +\rho_f -C\nabla_x \left(\frac{1}{\sigma_{s0}}\nabla_x \rho_f \right)=0\,,\quad
\partial_t \rho_g -\rho_g +C\nabla_x \left(\frac{1}{\sigma_{s0}}\nabla_x \rho_g \right)=0\,
\end{equation*}
with suitable initial data and boundary condition.

Now decompose $\gamma_\Kn$ also into a layer and interior parts, i.e., $\gamma_\Kn = (\gamma_\Kn)_\text{bl} + (\gamma_\Kn)_\text{int}$, then for the interior part, using \eqref{eqn:sca_expand}, we have
\begin{eqnarray}
\left( \gamma_\Kn \right)_\text{int} &=& \frac{1}{\Kn} \int_0^T \left( \average{f_\text{int}} \average{g_\text{int}} - \average{f_\text{int} g_\text{int}} \right) \rd{t}  \nonumber
\\ & = & \frac{1}{\Kn} \int_0^T \left( \rho_f + \Kn^2 \average{f_2}\right) \left( \rho_g + \Kn^2 \average{g} \right) \nonumber
\\ && \hspace{1cm} - \average{\left( \rho_f - \frac{\Kn}{\sigma_{s0}} v \cdot \nabla_x \rho_f + \Kn^2 f_2 \right) \left( \rho_g + \frac{\Kn}{\sigma_{s0} }v\cdot \nabla_x \rho_g + \Kn^2 g_2 \right)} \rd{t} \nonumber 
\\ &=& \frac{\Kn}{\sigma_{s0}^2} \int_0^T\average{(v\cdot \nabla_x \rho_f) (v\cdot \nabla_x \rho_g)} \rd{t} + \mathcal{O}(\Kn^2) \nonumber
\\ &=& \frac{C\Kn}{\sigma_{s0}^2} \int_0^T \nabla_x \rho_f \cdot \nabla_x \rho_g \rd{t}+  \mathcal{O}(\Kn^2)\,,
\end{eqnarray}
where $C$ again depends on the dimension of the velocity space. 

 Now denote $c(x)=\sigma_s(x)-\tilde{\sigma}_s(x)$, then one has
\[
|\langle \gamma_{\Kn}, c\rangle_{L^2(\rd{x})}| \leq \delta\,.
\]
then choose $\sigma_s(x)$ such that $c(x)$ vanishes in the layer,
\[
\langle\gamma_\Kn, c\rangle_{L^2(\rd{x})} = \langle(\gamma_\Kn)_\text{int}, c_\text{int}\rangle_{L^2(\rd{x})}
= \langle -\frac{C\Kn}{\sigma_{s0}^2}\int_0^T \nabla_x \rho_f \cdot \nabla_x \rho_g \rd{t} , ~c_\text{int} \rangle_{L^2(\rd{x})}+ \mathcal{O}(\Kn^2)\,,
\]
we see that
\[
c \sim \mathcal{O} \left( \frac{\delta }{ \Kn}\right) \,.
\]
\end{proof}

%%%%%%%%%%%%%%%%%%%%%%%%%%%%%%%%%%%%%%%%%%%%%%%%%%%%%%
\section{Highly forward peaked regime}
In this section, we consider the anisotropic scattering, and study the well/ill-posedness of the inverse RTE in the highly forward peaked regime, in which the time-dependent RTE is asymptotically equivalent to the Fokker-Planck equation. For simplicity, we study the critical case with zero absorption and $x$-independent scattering. The radiative transfer equation reads
\begin{equation}\label{eqn:hfp}
\begin{cases}
\partial_tf(t,x,v)+ v\cdot \nabla_x f(t,x,v)=\mathcal{L}f(t,x,v)  \,, \\
f(0,x,v)=f^I(x,v)\quad\text{on}\quad \Omega\times \Sp^{d-1}  \,, \\
f(t,x,v) = \phi(t,x,v) \quad\text{on}\quad (0,T)\times \Gamma_- \,,
\end{cases}
\end{equation}
where the collision operator takes the form:
\begin{equation}\label{eqn:hfp_L}
\mathcal{L}f(t,x,v)=\frac{1}{\epsilon^2}\int_{\Sp^{d-1}}\sigma \left(\frac{1-v\cdot v'}{\epsilon} \right)(f(t,x,v')-f(t,x,v))\rd{v'}\,.
\end{equation}
Without loss of generality, we assume that $\sigma$ integrates to one, i.e,
\begin{equation} \label{eqn:uuu}
\frac{1}{\epsilon} \int_{\Sp^{d-1}} \sigma\left( \frac{1- v\cdot v'}{ \epsilon }\right) \rd v' = 1\,.
\end{equation}

Considering $v'$ is the incident direction and $v$ is the scattering direction, then the small parameter $\epsilon$ reinforces ``small-angle" scattering---the kernel is peaked in the forward direction of flight; it also plays a role of mean free path, which accounts for strong scattering effect. Here $v\in \Sp^{d-1}$ is a unit vector denoting the direction of flight. Hereafter, we will focus on dimension $d=3$.

The existence of such a regime was longly exposed to the area~\cite{Pomraning_FP, Larsen_FP}, but has received little attention in the inverse problem setting. It is not quite known how stabilities change according to $\epsilon$ despite some conjectures~\cite{Bal_review}. We address this issue in this section. We will first formally derive the Fokker-Planck limit in section~\ref{sec:FP}, and set up the inverse problem in section~\ref{sec:inverse_FP}. Stability with respect to $\epsilon$ will be discussed in section~\ref{sec:stability_FP}.

%%%%%%%%%%%%%%%%%%%%%%%%%%%%%%%%%%%%%%%%%%%%%%%%%
\subsection{Fokker-Planck Limit}\label{sec:FP}
The equation, in the zero limit of $\epsilon$, loses the large-angle scattering, and effectively is equivalent to the Fokker-Planck equation. The original derivation was seen in~\cite{Pomraning_FP,Larsen_FP}. Denote $\mu=v\cdot v'$ the cosine of the scattering angle, then the scattering cross-section has the Legendre polynomial expansion:
\begin{equation*}
\frac{1}{\epsilon}\sigma \left(\frac{1-\mu}{\epsilon} \right)=\sum_{n=0}^\infty\frac{2n+1}{4\pi}\sigma_{n} P_n(\mu) \,,
\end{equation*}
where the projection coefficients on the $n$-th Legendre polynomial $P_n(\mu)$ is:
\begin{equation} \label{eqn:sigmasn_define}
\sigma_{n} = \frac{2\pi}{\epsilon}\int_{-1}^1\sigma \left(\frac{1-\mu}{\epsilon}\right)P_n(\mu)\rd{\mu}\,.
\end{equation}
It is immediate that $\sigma_{0}=1$ from \eqref{eqn:uuu}.

To proceed, we write $v$ using spherical coordinates: $v = (\sqrt{1-v_3^2}\cos\psi,\sqrt{1-v_3^2}\sin\psi,v_3)$ and introduce the spherical harmonic functions
\begin{equation*}
Y_{n,m}(v)= \left[\frac{2n+1}{4\pi}\frac{(n-|m|)!}{(n+|m|)!} \right]^{1/2}\times (-1)^{(m+|m|)/2}P_{n,|m|}(v_3)e^{in\psi}\,, n \geq0, ~ -n \leq m \leq n \,,
\end{equation*}
where $P_{n,|m|}(v_3)$ are associated Legendre functions
\begin{equation*}
P_{n,|m|}(v_3)=(1-v_3^2)^{m/2} \left(\frac{\rd}{\rd{v_3}} \right)^m P_n(v_3),\quad 0\leq m\leq n\,.
\end{equation*}
The spherical harmonic functions form a complete set of orthonormal basis and thus any suitably smooth function $f(v)$ defined on the unit sphere can be expanded as
\begin{equation}\label{eqn:hfp_exp}
f(v) = \sum_{n=0}^\infty\sum_{m=-n}^n f_{n,m}Y_{n,m}(v), \qquad f_{n,m}:=\int_{\Sp^2}f(v)Y_{n,m}(v)\rd{v}\,.
\end{equation}
Note also that $P_n(\mu)$ satisfy the addition formula:
\[
P_n(v\cdot v') = \frac{4\pi}{2n+1} \sum_{m=-n}^{n} Y_{n,m}(v) Y_{n,m} (v') \,.
\]
Therefore, the collision \eqref{eqn:hfp_L} admits the following expansion
\begin{eqnarray} \label{eqn:Lf00}
\opL f  &= & \frac{1}{\epsilon}  \sum_{n=0}^\infty \sum_{m=-n}^n   \sigma_{n} \int_{\Sp^2}  Y_{n,m}(v) Y_{n,m}(v') \left[ f(v')-f(v) \right] \rd v' \nonumber
\\ & =&  \frac{1}{\epsilon} \left[  \sum_{n=0}^\infty \sum_{m=-n} ^n \sigma_{n} f_{n,m} Y_{n,m} (v)  - \sigma_{0} f(v) \right]  \nonumber
\\&=& \frac{1}{\epsilon} \sum_{n=0}^\infty \sum_{m=-n}^n   (\sigma_{n} - \sigma_{0}) f_{n,m} Y_{n,m}(v) \,.
\end{eqnarray}
The second equality holds because $\int Y_{n,m} \rd{v}= 0$ for all $n\geq 1$ and $\int Y_{n,m} \rd{v}= 1$ only if $n=m=0$.
Let $\alpha = \frac{1-\mu}{\epsilon}$, we rewrite $\sigma_{n}$ in \eqref{eqn:sigmasn_define} as 
\begin{eqnarray} \label{eqn:sigmasn2}
\sigma_{n} &=& 2\pi \int_0^{2/\epsilon} \sigma(\alpha)  P_n(1-\epsilon \alpha ) \rd \alpha  \nonumber
\\ &=& 2\pi \int_0^{2/\epsilon} \sigma (\alpha)  \left[  P_n(1) - P_n'(1) \epsilon \alpha + \frac{P_n''(1)}{2} (\epsilon \alpha)^2 + \cdots     \right] \rd\alpha\,.
\end{eqnarray}
If we define 
\begin{equation}\label{eqn:xi}
\begin{aligned}
\xi_n&:=2\pi \int_{-1}^1 \frac{1}{\epsilon^2}\sigma \left(\frac{1-\mu}{\epsilon} \right)(1-\mu)^n\rd{\mu}\\
&=\epsilon^{n-1}\left(2\pi\int_0^{2/\epsilon}t^n\sigma(t)\rd{t}\right)\\
&=\mathcal{O}(\epsilon^{n-1}) \,,
\end{aligned}
\end{equation}
then $\sigma_{n}$ can be rewritten as 
\begin{equation} \label{eqn:sigmasn}
\sigma_{n} = \epsilon \left[ P_n(1) \xi_0 + P_n'(1) \xi_1 + \frac{1}{2} P_n''(1) \xi_2 + \frac{1}{3!} P_n'''(1) \xi_3 + \cdots    \right]\,.
\end{equation}
Note that $\xi_0$ is fixed and has no dependence on $\sigma$ due to~\eqref{eqn:uuu}:
\begin{equation} \label{sigma_0}
\xi_0=2\pi \int_{-1}^1 \frac{1}{\epsilon^2}\sigma \left(\frac{1-\mu}{\epsilon} \right)\rd{\mu} = \frac{2\pi}{\epsilon}\,.
\end{equation}

Since 
\[
P_n(1) = 1, \quad P_n'(1) = \frac{n(n+1)}{2}, \quad P_0(\mu) = 1\,,
\]
we have from \eqref{eqn:sigmasn2} that 
\begin{eqnarray*}
\sigma_{n} - \sigma_{0} = - \epsilon \frac{n(n+1)}{2} \xi_1  + \mathcal{O} (\epsilon^2) \,,
\end{eqnarray*}
and therefore plugging it into \eqref{eqn:Lf00} we get
\begin{equation} \label{eqn:Lf18}
\opL f = \sum_{n=0}^\infty \sum_{m=-n}^n -\frac{n(n+1)}{2} \xi_1 f_{n,m} Y_{n,m}  + \mathcal{O} (\epsilon) \,.
\end{equation}

Recall that for the well-known Fokker-Planck operator in spherical coordinates:
\begin{equation*}
\opL_\text{FP} f(v) = \left[   \frac{\partial}{\partial v_3} (1-v_3^2) \frac{\partial}{\partial v_3} + \frac{1}{1-v_3^2} \frac{\partial^2}{ \partial \psi ^2}   \right] f(v) \,,
\end{equation*}
we have
\begin{equation} \label{eqn:LFP}
\opL_\text{FP} Y_{n,m}(v) = -n(n+1) Y_{n,m} (v)\,.
\end{equation}
Comparing \eqref{eqn:Lf18} and \eqref{eqn:LFP}, we get the Fokker-Planck approximation: 
\begin{equation*}
\begin{aligned}
\mathcal{L}f(v)&=\sum_{n=0}^\infty\sum_{m=-n}^n -\left(\frac{n(n+1)}{2}\xi_1+\mathcal{O}(\epsilon) \right)f_{n,m}Y_{n,m}
&=\frac{\xi_1}{2}\opL_\text{FP}f(v)+\mathcal{O}(\epsilon)\,,
\end{aligned}
\end{equation*}
with
\begin{equation} \label{eqn:xi1}
\xi_1 = 2\pi\int_0^{\epsilon/2}t\sigma(t)\rd{t}\sim 2\pi\int_0^\infty t\sigma(t)\rd{t}\,.
\end{equation}
In other words, when $\epsilon$ is small, the linear scattering operator $\opL$ converges to the Fokker-Planck operator with a scalar multiplication and the linear transport equation converges to the Fokker-Planck equation
\[
\partial_t f + v \cdot \nabla_x f = \frac{\xi_1}{2} \opL_{\text{FP}} f\,,
\]
where $\opL_{\text{FP}}$ and $\xi_1$ are defined in \eqref{eqn:LFP} and \eqref{eqn:xi1}, respectively. 

\begin{remark}\label{rmk:L_vs_LFP}
We note that the unknown in the collision term defined in~\eqref{eqn:hfp_L} is $\sigma(\mu)$. As a function of $\mu$, it could be fully recovered only if all the coefficients $\sigma_n$ in~\eqref{eqn:sigmasn_define} are known. According to~\eqref{eqn:sigmasn}, this requires knowledge about $\xi_n$ for all $n$. However, in the zero limit, the collision term converges to the Laplace operator, and there is only one scalar that is unknown: $\xi_1$. As a result, the limiting Fokker-Planck equation is much easier to invert heuristically. This will be reflected in section~\ref{sec:stability_FP}.
\end{remark}

%%%%%%%%%%%%%%%%%%%%%%%%%%%%%%%%%%%%
\subsection{Inverse problem setup}\label{sec:inverse_FP}
In the inverse problem setting, we are given inflow data and measure the outflow, with which we infer the scattering coefficient $\sigma(v,v')$. Here the albedo operator is given by:
\begin{equation*}
\mathcal{A}(\sigma): \qquad \phi(t,x,v)|_{(0,T)\times\Gamma_-}\rightarrow \int_{\Gamma_+(y)}f(t,y,v)n(y)\cdot v\rd{v} \,.
\end{equation*}

We first linearize the albedo operator. Like always, we assume that a priori knowledge provides a background state $\sigma_0$ such that the residue $\tilde{\sigma}:=\sigma-\sigma_0$ satisfies
\[
|\tilde{\sigma}| \ll |\sigma| \,,\quad\text{a.s.}\,,
\]
then with background state $\sigma_0$, one gets the solution $f_0$ that solves the following initial boundary value problem 
\begin{equation} \label{eqn:hfp_000}
\begin{cases}
\partial_t f_0(t,x,v)+ v\cdot \nabla_x f_0(t,x,v)=\mathcal{L}_0f_0(t,x,v) \,,
\\ f_0(0,x,v)=0 \quad\text{on}\quad  \Omega\times \mathbb{S}^2 \,,
\\ f_0(t,x,v)=\phi(t,x,v) \quad\text{on}\quad (0,T)\times \Gamma_- \,,
\end{cases} 
\end{equation}
where
\begin{equation*}
\mathcal{L}_0f_0(t,x,v)=\frac{1}{\epsilon^2}\int_{4\pi} \sigma_0 \left(\frac{1-\mu}{\epsilon} \right)(f_0(t,x,v')-f_0(t,x,v))\rd{v'}\,.
\end{equation*}
The residue $$\tilde{f}(t,x,v):=f(t,x,v)-f_0(t,x,v)$$ then satisfies 
\begin{equation}\label{eqn:hfp_tilde}
\partial_t \tilde{f}(t,x,v)+ v\cdot \nabla_x \tilde{f}(t,x,v)=\mathcal{L}_0\tilde{f}(t,x,v)+\tilde{\mathcal{L}}f_0(t,x,v)
\end{equation}
with zero initial data and boundary data. Here 
\begin{equation} \label{eqn:Ltilde}
\tilde{\mathcal{L}}f_0(t,x,v)=\frac{1}{\epsilon^2}\int_{4\pi} \tilde{\sigma} \left(\frac{1-\mu}{\epsilon} \right)(f_0(t,x,v')-f_0(t,x,v) )\rd{v'} \,.
\end{equation}
We also define an adjoint problem to \eqref{eqn:hfp_000}:
\begin{equation}\label{eqn:hfp_g}
\begin{cases}
-\partial_t g(t,x,v)-v\cdot \nabla_x g(t,x,v)=\mathcal{L}_0g(t,x,v) \,, 
\\ g(T,x,v)=0\quad\text{on}\quad \Omega\times \mathbb{S}^2 \,, 
\\ g(t,x,v)=\delta(\tau,y) \quad\text{on}\quad (0,T)\times \Gamma_+ \,.
\end{cases}
\end{equation}
Multiply \eqref{eqn:hfp_tilde} and \eqref{eqn:hfp_g} by $g$ and $\tilde{f}$ respectively, and subtract them, we get, after integrating in  $x$, $v$ and $t$,
\begin{equation} \label{eqn:618}
\int_{\Gamma_+(y)}\tilde{f}(\tau,y,v)n(y)\cdot v \rd{v} =\int_{\Omega\times\mathbb{S}^2}\int_0^Tg(t,x,v)\tilde{\mathcal{L}}f_0(t,x,v)\rd{t}\rd{v}\rd{x} \,,
\end{equation}
where the LHS is the difference between measurement of $f(t,x,v)$ and the computed $f_0(t,x,v)$ at time $\tau$ and position $y\in \partial\Omega$, and we denote it by $b(\tau, y, \phi)$
\[
 b(\tau, y, \phi) = \int_{\Gamma_+(y)}\tilde{f}(\tau,y,v)n(y)\cdot v \rd{v} \,.
\]
The RHS of \eqref{eqn:618} gives a linear function for $\tilde{\sigma}$. In particular, using \eqref{eqn:Ltilde} we have
\begin{equation}\label{eqn:hfp_IBP}
b(\tau, y, \phi )=\frac{1}{\epsilon^2}\int_{\mathbb{S}^2\times\mathbb{S}^2}\tilde{\sigma} \left(\frac{1-\mu}{\epsilon} \right)\gamma_\epsilon(v,v')\rd{v'}\rd{v}\,,
\end{equation}
where
\begin{equation}\label{eqn:hfp_gamma}
\gamma_\epsilon(v,v';\tau,y,\phi):=\int_0^T\int_{\Omega}g(t,x,v)[f_0(t,x,v')-f_0(t,x,v)]\rd{x}\rd{t} \,.
\end{equation}
By varying $\{\tau,y\}$ and $\phi$, one obtains different $g$ and $f_0$, and thus $\gamma_\epsilon$, making~\eqref{eqn:hfp_IBP} a Fredholm operator of first kind with parameters $\{\tau,y,\phi\}$.

As mentioned in Remark~\ref{rmk:wellposedness_ab}, to have a unique recovery of $\tilde{\sigma}$ in $L_p$ space, one needs $\gamma_\epsilon$ expanding the adjoint space $L_q$ (with $\frac{1}{p} + \frac{1}{q} = 1$). The injectivity is beyond the scope of the current paper, and we only discuss the stability in the following section.

%%%%%%%%%%%%%%%%%%%%%%%%%%%%%%%%%%%%%%%%%%%%%%%%%%
\subsection{Stability in the highly forward peaked regime}\label{sec:stability_FP}
In this section we study the stability in the recovery of $\sigma$ in the forward peaked regime. There are two aspects of the problem:

\begin{itemize}
\item[1.] To fully recover $\sigma(1-v\cdot v')$, as mentioned in Remark~\ref{rmk:L_vs_LFP}, one needs all its moments $\sigma_n$, which in turn requires the information of $\xi_n$ for all $n$. However, since $\xi_n$ diminishes at the order of $\epsilon^{n-1}$, obtaining $\xi_n$ is very sensitive to the pollution in the data. Indeed, suppose the data has pollution of order $\delta$, then there are at most $n_0=\log_\epsilon\delta +1$ terms that can be recovered. Now keeping $\delta$ fixed and sending $\epsilon$ to $0$, the number $n_0$ decreases to $1$, meaning that all the higher order information get lost. This is indeed consistent with the view of the singular decomposition ~\cite{ChoulliStefenov,Bal_review}. In that viewpoint, the reconstruction of $\sigma$ relies on the separation of the ballistic component (pure transport) and the scattered components (mainly the single-scattering). In the forward peaked regime, however, the single-scattering concentrates on the original velocity and does not distinguish from the ballistic transport much, making the separation hard, and thus disables the reconstruction. This will be demonstrated in Theorem~\ref{thm:recovery_sigma}.

\item[2.] Nevertheless, in the Fokker-Planck regime, its not the full information $\sigma(1-v\cdot v')$ that matters, but the rescaled one $\frac{1}{\epsilon}\sigma\left(\frac{1-v\cdot v'}{\epsilon} \right)$.  As written, when $\epsilon$ is small, the rescaled $\sigma$ will concentrate around $v\cdot v'=1$ and only a little information is needed to recover its shape. Indeed, according to~\eqref{eqn:sigmasn2},~\eqref{eqn:xi} and~\eqref{eqn:sigmasn}, $\xi_n$ quickly decays to zero, and all of $\sigma_n$ are dominated by the first few $\xi_n$ for certain accuracy. For example, if $\epsilon$ accuracy is needed for $\sigma_n$, one only needs to recover one parameter $\xi_1$. This significantly reduces the amount of measurements needed. In this sense, we find that the inverse problem with highly forward peaked scattering is actually more practice friendly. This is demonstrated in Theorem~\ref{thm:recovery_sigma_ep}.
\end{itemize}

To recover $\frac{1}{\epsilon} \tilde{\sigma}_s \left( \frac{1-v\cdot v'}{\epsilon}\right)$, one simply needs to find all its Legendre coefficients $\tilde{\sigma}_{n}$ in the expansion
\begin{equation}\label{eqn:tilde_sigma_exp}
\frac{1}{\epsilon} \tilde{\sigma} \left( \frac{1-v\cdot v'}{\epsilon}\right) = \sum_{n=0}^\infty \tilde{\sigma}_n P_n (\mu)\,,
\qquad \mu = v \cdot v'\,.
\end{equation}
Using the same expression as in~\eqref{eqn:sigmasn}, one has:
\begin{equation} \label{eqn:exp000}
\tilde{\sigma}_{n} = \eps \left[ P_n(1) \tilde{\xi}_0 + P_n'(1) \tilde{\xi}_1 + \frac{1}{2} P_n''(1) \tilde{\xi}_2 + \frac{1}{3!} P_n'''(1) \tilde{\xi}_3 + \cdots    \right]\,,
\end{equation}
with
\begin{equation}\label{eqn:xi_tilde}
\tilde{\xi}_n = \epsilon^{n-1}  2\pi \int_0^{\epsilon/2} t^n \tsigmas (t) \rd{t} = \mathcal{O}(\epsilon^{n-1})\,.
\end{equation}
Introducing the above relations into \eqref{eqn:hfp_IBP}, we get
\begin{eqnarray*}
b(\tau, y, \phi) &=&  \frac{1}{\epsilon} \sum_{n=0}^\infty \tsigmasn \int_{\Sp^2} P_n(\mu) \gamma_\epsilon(v,v') \rd{v} \rd{v'} 
\\ &=& \sum_{n=0}^\infty \sum_{j=0}^\infty \xi_j P^{(j)}_n (1) \frac{1}{j!} \int_{\Sp^2} P_n(\mu) \gamma_\epsilon(v,v') \rd{v} \rd{v'} 
\\&=& \sum_{j=0}^\infty  \xi_j   \left(  \frac{1}{j!} \sum_{n=0}^\infty  P^{(j)}_n (1) \int_{\Sp^2} P_n(\mu) \gamma_\epsilon(v,v') \rd{v} \rd{v'} \right) \,.
\end{eqnarray*}
Consequently, we obtain the following linear system for $\vec{\xi} = (\xi_1, \xi_2, \cdots ) $
\begin{equation}\label{Axb}
\Amat \vec{\xi} = \vec{b}\,,
\end{equation}
where $\vec{b}$ is a column vector whose size is equal to the number of experiments, and in matrix $\Amat =[a_{ij}]$, the component $a_{ij}$ is determined by 
\[
a_{ij} = \frac{1}{j!} \sum_{n=0}^\infty  P^{(j)}_n (1) \int_{\Sp^2} P_n(\mu) \gamma_\epsilon(v,v') \rd{v} \rd{v'}  \,,
\]
where the subscript $i$ represent experiment $i$ with choice $\tau_i$, $y_i$, $\phi_i$. Recall the expression of $\gamma_\epsilon$ in \eqref{eqn:hfp_gamma}, one has
\begin{eqnarray*}
\int_{\Sp^2} P_n(\mu) \gamma_\epsilon (v,v') \rd{v} \rd{v'} &=& \frac{4\pi}{ 2n+1} \sum_{m=-n}^n \int  Y_{n,m}(v) Y_{n,m}(v') \gamma_\epsilon(v,v') \rd{v} \rd{v'}
\\&=& \frac{4\pi}{ 2n+1} \sum_{m=-n}^n \int  Y_{n,m}(v) Y_{n,m}(v') \left[\int g(v)f_0(v') - g(v) f_0(v) \rd{x} \rd{t} \right] \rd{v} \rd{v'}
\\&=& \frac{4\pi}{ 2n+1} \sum_{m=-n}^n \left[\bar{g}_{n,m} {(\overline{f_0})}_{n,m}  - (\overline{gf_0})_{n,m}\delta_{n,m}\right]\,,
\end{eqnarray*}
where the over-line denotes integration in both $x$ and $t$. Therefore, 
\begin{equation} \label{aij}
a_{ij} = \sum_{n=0}^\infty \frac{P_n^{(j)}}{j!} \frac{4\pi}{2n+1} \sum_{m=-n}^n \left[\bar{g}_{n,m} {(\overline{f_0})}_{n,m}  - (\overline{gf_0})_{n,m} \delta_{n,m} \right]\,.
\end{equation}

\begin{theorem}\label{thm:recovery_sigma_ep}
The recovery of $\frac{1}{\epsilon} \tilde{\sigma}\left(\frac{1-\mu}{\epsilon}\right)$ does not deteriorates as $\epsilon\to0$. More precisely, if we define 
the distinguishability coefficient as 
\begin{equation}\label{eqn:kepsilon}
\kappa_\epsilon = \sup_{\sigma \in \Gamma_\delta} \frac{\|\frac{1}{\epsilon} \sigma\left( \frac{1-\mu}{\epsilon} \right)  - \frac{1}{\epsilon} \tilde{\sigma} \left( \frac{1-\mu}{\epsilon}\right) \|_\infty}{\left\|   \frac{1}{\epsilon}  \tilde{\sigma} \left( \frac{1-\mu}{\epsilon} \right) \right\|_\infty}\,,
\end{equation}
where
\[
\Gamma_\delta = \left\{ \frac{1}{\epsilon} \sigma \left(\frac{1-\mu}{\epsilon} \right): \quad  \sup_{\substack{\forall \|\phi\|_{L^\infty(\Gamma_-)}\leq 1,\\ \forall y\in \partial\Omega, ~ \tau\in [0,T]}}  \left|\frac{1}{\epsilon} \int  \left[ \frac{1}{\epsilon} \sigma \left( \inner \right) - \frac{1}{\epsilon} \tilde{\sigma} \left( \frac{1-\mu}{\epsilon} \right) \right] \gamma_\epsilon(v,v'; \tau, y, \phi) \rd v \rd v'  \right|  \leq \delta  \right\} \,,
\]
then for $\epsilon \ll \delta $
\[
\kappa_\epsilon \sim \mathcal{O} (\delta \epsilon)\,.
\]
\end{theorem}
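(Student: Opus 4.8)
The plan is to recast the statement as one about the linear system \eqref{Axb} and to exploit the scale separation $\xi_1=\mathcal{O}(1)$, $\xi_n=\mathcal{O}(\epsilon^{n-1})$ for $n\ge 2$ that is built into \eqref{eqn:xi_tilde}. In the highly forward peaked regime the data essentially ``sees'' only the single effective parameter $\xi_1$ (the Fokker--Planck coefficient), so the recovery of $\frac{1}{\epsilon}\tilde\sigma(\frac{1-\mu}{\epsilon})$ becomes a one-parameter problem whose conditioning stays bounded as $\epsilon\to0$; the extra factor of $\epsilon$ in $\mathcal{O}(\delta\epsilon)$ is the $1/\epsilon$ normalization of the peaked profile sitting in the denominator of \eqref{eqn:kepsilon}.

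First I would reduce to the linear system. For a competitor $\frac{1}{\epsilon}\sigma(\frac{1-\mu}{\epsilon})\in\Gamma_\delta$, let $\vec\zeta=(\zeta_1,\zeta_2,\dots)$ with $\zeta_n:=\xi_n(\sigma)-\tilde\xi_n$ be the difference of the moment vectors of $\sigma$ and of the exact perturbation $\tilde\sigma$. Since $b(\tau,y,\phi)$ is the $i$-th entry of $\Amat\vec\xi(\tilde\sigma)$, and since $\sigma$ and $\sigma_0$ share the normalization \eqref{eqn:uuu} (so $\zeta_0=0$), the membership condition for $\Gamma_\delta$ is exactly $\|\Amat\vec\zeta\|_\infty\le\delta$ with $\Amat=[a_{ij}]$ as in \eqref{aij}; moreover \eqref{eqn:xi_tilde} gives $\zeta_1=\mathcal{O}(1)$ and $\zeta_n=\mathcal{O}(\epsilon^{n-1})$ for $n\ge2$.

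Next, only the first column of $\Amat$ survives the limit. Inserting the interior expansions of $f_0$ and $g$ from Section~\ref{sec:FP} into $\gamma_\epsilon$ \eqref{eqn:hfp_gamma} and into \eqref{aij}, and using that $\rho_f,\rho_g$ solve Fokker--Planck type equations so that their spherical-harmonic moments decay geometrically in $n$, one checks that each $a_{ij}$ is a convergent series with an $\epsilon$-independent leading term; in particular $a_{i1}$ converges to a pairing of the form $\int_0^T\!\!\int_\Omega\average{g\,\opL_{\text{FP}}f_0}\,\rd x\,\rd t$ obtained by linearizing the Fokker--Planck albedo operator (compare \eqref{eqn:Lf18}--\eqref{eqn:LFP}). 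Writing the constraint as $|a_{i1}\zeta_1|\le\delta+\sum_{j\ge2}|a_{ij}|\,|\zeta_j|$, the tail is $\mathcal{O}(\epsilon)$, so for a measurement $(\tau^\ast,y^\ast,\phi^\ast)$ chosen to make the limiting pairing nonzero, $|a_{i^\ast1}|\ge c_0>0$, we get $|\zeta_1|\le(\delta+\mathcal{O}(\epsilon))/c_0=\mathcal{O}(\delta)$ as soon as $\epsilon\ll\delta$. Finally, feeding $\zeta_0=0$, $\zeta_1=\mathcal{O}(\delta)$, $\zeta_n=\mathcal{O}(\epsilon^{n-1})$ into \eqref{eqn:exp000} for $\sigma-\tilde\sigma$, every Legendre coefficient of $\frac{1}{\epsilon}(\sigma-\tilde\sigma)(\frac{1-\mu}{\epsilon})$ is controlled by $\zeta_1$ up to $\mathcal{O}(\epsilon^2)$; summing (using $|P_n|\le1$ and the decay that makes the formal series converge) bounds the numerator of \eqref{eqn:kepsilon} by $\mathcal{O}(\delta)$, while its denominator is $\gtrsim 1/\epsilon$ because $\frac{1}{\epsilon}\tilde\sigma(\frac{1-\mu}{\epsilon})$ peaks at $\mu=1$ with height $\mathcal{O}(1/\epsilon)$. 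Dividing and taking $\sup$ over $\Gamma_\delta$ gives $\kappa_\epsilon=\mathcal{O}(\delta\epsilon)$.

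\textbf{Main obstacle.} The delicate points are the $\epsilon$-uniform boundedness of the $a_{ij}$ (one must show that the boundary-layer parts of $f_0,g$ do not pollute $\gamma_\epsilon$ and that the spherical-harmonic moments decay fast enough for the series in \eqref{aij} to converge uniformly) and, above all, the nondegeneracy claim $|a_{i^\ast1}|\ge c_0$ --- i.e.\ that the limiting Fokker--Planck pairing $\int\average{g\,\opL_{\text{FP}}f_0}$ can be made bounded away from zero by choosing the experiment. Turning the moment decay estimates on $\xi_n$ into a genuine sup-norm bound on the full rescaled kernel (not only on its leading Fokker--Planck profile) is the other point requiring care; everything else is bookkeeping with the asymptotic expansions already established in Section~\ref{sec:FP}.
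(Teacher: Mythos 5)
Your proposal follows essentially the same route as the paper's proof: expand both rescaled kernels in the moments $\xi_k$, use the shared normalization $\tilde{\xi}_0=\xi_0$ (your $\zeta_0=0$), deduce $|\xi_1-\tilde{\xi}_1|=\mathcal{O}(\delta)$ from the $\Gamma_\delta$ constraint because only the leading pairing survives as $\epsilon\to 0$ (your ``first column of $\Amat$'' is the paper's $\int h_1(\mu)\,\gamma_\epsilon$ term), and then extract the extra factor $\epsilon$ from the $\mathcal{O}(\epsilon^{-1})$ size of the denominator through $\xi_0$. The nondegeneracy and $\epsilon$-uniformity issues you flag as obstacles are precisely the points the paper dispatches by simply asserting that $h_1(\mu)$ and $\gamma_\epsilon$ are $\mathcal{O}(1)$, so your argument neither omits nor adds anything essential relative to the paper's.
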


\begin{proof}
From \eqref{eqn:tilde_sigma_exp}--\eqref{eqn:xi_tilde}, we see that 
\begin{equation*}
\frac{1}{\epsilon} \tilde{\sigma} \left(\frac{1-\mu}{\epsilon} \right) = \epsilon \sum_{k=0}^\infty \left[ \sum_{n=0}^\infty \frac{2n+1}{4\pi} P_n(\mu) \frac{P_n^{(k)}(1)}{k!} \right] \tilde{\xi}_k := \epsilon \sum_{k=0}^\infty h_k(\mu) \tilde{\xi}_k\,,
\end{equation*}
where $\tilde{\xi}_k$ is defined the same as in \eqref{eqn:xi_tilde}. Similarly, $\frac{1}{\epsilon} {\sigma}\left(\frac{1-\mu}{\epsilon}\right)$ has the expansion
\[
\frac{1}{\epsilon} {\sigma} \left(\frac{1-\mu}{\epsilon} \right) = \epsilon \sum_{k=0}^\infty h_k(\mu) {\xi}_k\,.
\]
Note firstly that $\tilde{\xi}_0 = \xi_0$, then for $\frac{1}{\epsilon} {\sigma} (\frac{1-\mu}{\epsilon})\in \Gamma_\delta$, we have
\begin{eqnarray*}
\left| \frac{1}{\epsilon} \int  \left[ \frac{1}{\epsilon} \sigma \left( \inner \right) - \frac{1}{\epsilon} \tilde{\sigma} \left( \frac{1-\mu}{\epsilon} \right) \right] \gamma_\epsilon(v,v'; \tau, y, \phi) \rd v \rd v' \right|
&=&\left| \sum_{k=1}^\infty \int h_k(\mu) (\xi_k - \tilde{\xi}_k) \gamma_\epsilon \rd v \rd v' \right|
\\ & =& \left|  \int h_1(\mu) (\xi_1 - \tilde{\xi}_1) \gamma_\epsilon \rd v \rd v' + \mathcal{O}(\epsilon)  \right| \leq \delta\,,
\end{eqnarray*} 
which implies that, for $\epsilon \ll \delta$,  $|\xi_1 -\tilde{\xi}_1| \leq \mathcal{O}(\delta)$ since $h_1(\mu)$ and $\gamma_\epsilon $ are $\mathcal{O}(1)$. Plugging this result into the definition \eqref{eqn:kepsilon}, one immediately sees that
\[
\kappa_\epsilon = \sup_{\sigma \in \Gamma_\delta } \frac{(\xi_1 - \tilde{\xi}_1)h_1(\mu) + \mathcal{O}(\epsilon^2)}{ \xi_0 h_0(\mu) + \xi_1 h_1(\mu) + \mathcal{O}(\epsilon^2)} \sim \mathcal{O}(\delta \epsilon)\,,
\]
where $\epsilon$ on the right comes from the fact that $\xi_0 \sim \mathcal{O}(\epsilon^{-1})$.
\end{proof}

On the contrary of the above result, if we want to fully recover $\sigma$, then the presence of the small scale $\epsilon$ will make it impossible. Specifically, we have the following theorem.  
\begin{theorem}\label{thm:recovery_sigma}
Suppose $\sigma\in H_k(\rd{\mu})$ (assuming $k$-th regularity in $\sigma$), the recovering $\sigma$ becomes impossible in the limit of $\epsilon \rightarrow 0$ in the sense that if $\delta$ error is allowed in $\vec{b}$ (e.g., measurement error, see \eqref{Axb}), then the error in $\sigma$ will be $\left(\frac{\ln\epsilon}{\ln\delta}\right)^k$.
\end{theorem}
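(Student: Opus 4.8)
The plan is to combine the geometric decay $\xi_n = \mathcal{O}(\epsilon^{n-1})$ established in \eqref{eqn:xi} with a Sobolev truncation estimate for the Legendre expansion of $\sigma$. The first ingredient tells us how many of the moments survive above the noise level $\delta$; the second converts that count into a reconstruction error for $\sigma$ itself.

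First I would analyze the conditioning of the system $\Amat \vec{\xi} = \vec{b}$ from \eqref{Axb}, with $\vec{\xi} = (\xi_1,\xi_2,\dots)$. From the explicit form \eqref{aij}, each entry $a_{ij}$ is assembled from the spherical-harmonic coefficients $\bar g_{n,m}$, $(\overline{f_0})_{n,m}$ of the background and adjoint solutions, both of which remain $\mathcal{O}(1)$ as $\epsilon\to0$ (since $f_0,g$ converge to smooth Fokker--Planck solutions, exactly as used in the proof of Theorem~\ref{thm:recovery_sigma_ep}); equivalently, writing $\vec{b} = \sum_{j\ge1}\xi_j \vec{w}_j$, each moment $\xi_j$ is weighted by an $\mathcal{O}(1)$ column $\vec{w}_j$. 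Hence the footprint of $\xi_n$ in the data has size $\mathcal{O}(\epsilon^{n-1})$, and once this drops below the error tolerance $\delta$ the corresponding $\xi_n$ is indistinguishable from noise. Solving $\epsilon^{n-1}\sim\delta$ shows that the recoverable moments are $\xi_1,\dots,\xi_{n_0}$ with $n_0 = \log_\epsilon\delta + 1 = \ln\delta/\ln\epsilon + 1$; as $\epsilon\to0$ with $\delta$ fixed, $n_0 \downarrow 1$, so in the limit only the Fokker--Planck coefficient $\xi_1$ is left, while the effective bandwidth available for reconstruction is $\log_\epsilon\delta$.

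Next I would translate ``$n_0$ recovered moments'' into a statement about $\sigma$. By \eqref{eqn:sigmasn} together with \eqref{eqn:xi}, knowing $\xi_0,\dots,\xi_{n_0}$ is equivalent, to leading order in $\epsilon$, to knowing the Legendre coefficients $\sigma_0,\dots,\sigma_{n_0}$ of $\sigma$, i.e. to knowing the truncated expansion $\Pi_{n_0}\sigma := \sum_{n=0}^{n_0}\tfrac{2n+1}{4\pi}\sigma_n P_n(\mu)$, with the higher coefficients lost. Invoking the standard spectral-approximation bound, $\sigma\in H_k(\rd\mu)$ implies $\|\sigma - \Pi_{n_0}\sigma\|_{L^2(\rd\mu)} \lesssim n_0^{-k}\,\|\sigma\|_{H_k(\rd\mu)}$. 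Substituting $n_0 \approx \ln\delta/\ln\epsilon$ (the additive $1$ is harmless for the scaling) then yields a reconstruction error
\[
\|\sigma - \sigma_{\mathrm{rec}}\| \;\lesssim\; \Big(\tfrac{\ln\delta}{\ln\epsilon}\Big)^{-k} \;=\; \Big(\tfrac{\ln\epsilon}{\ln\delta}\Big)^{k}\,,
\]
which diverges as $\epsilon\to0$; this is the asserted obstruction to recovering $\sigma$.

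The main obstacle is Step~1: rigorously justifying that the noise floor caps the recoverable moments at $n_0$. This requires genuine two-sided control of the conditioning of $\Amat$ --- after factoring out the natural diagonal scaling $\mathrm{diag}(1,\epsilon,\epsilon^2,\dots)$, one must show the remaining matrix has columns bounded \emph{both above and below} uniformly in $\epsilon$, which amounts to quantitative lower bounds on the harmonic coefficients $\bar g_{n,m}$, $(\overline{f_0})_{n,m}$ in \eqref{aij} for a sufficiently rich family of probes $\phi$ and detector points $(\tau,y)$. (Without the lower bound one only gets an upper bound on what is recoverable, which is still enough for an impossibility statement.) The truncation estimate in Step~2 is classical, but the function space must be chosen compatibly with the Legendre basis for the exponent $k$ to come out as stated; at the level of rigor of the surrounding results this is a routine verification.
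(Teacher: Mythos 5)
Your Step 1 (the counting of visible moments) is sound and is exactly the mechanism the paper uses: the data weights multiplying $\xi_n$ in \eqref{Axb}--\eqref{aij} are $\mathcal{O}(1)$, $\xi_n=\mathcal{O}(\epsilon^{n-1})$ by \eqref{eqn:xi}, so only $n_0\approx \ln\delta/\ln\epsilon$ moments sit above the noise floor; and, as you yourself note, for an impossibility statement you do not need the two-sided conditioning of $\Amat$ (the paper avoids it by propagating a perturbation of $\sigma$ forward into $\vec{b}$ and showing it is invisible below $\delta$). The genuine gap is in Step 2. The hypothesis $\sigma\in H_k$ is regularity of $\sigma$ in \emph{its own (stretched) argument} $t=(1-\mu)/\epsilon\in[0,2/\epsilon]$, whereas your truncation $\Pi_{n_0}\sigma$ is a Legendre truncation in $\mu\in[-1,1]$, i.e.\ a truncation of the rescaled kernel $\frac{1}{\epsilon}\sigma\bigl(\frac{1-\mu}{\epsilon}\bigr)$ whose coefficients are the $\sigma_n$ of \eqref{eqn:sigmasn_define}. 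Two things go wrong there. First, the claimed equivalence ``knowing $\xi_0,\dots,\xi_{n_0}$ $\Leftrightarrow$ knowing $\sigma_0,\dots,\sigma_{n_0}$'' fails: by \eqref{eqn:sigmasn}, \emph{every} $\sigma_n$ is determined to leading order by $\xi_0,\xi_1$ alone ($\sigma_n\approx 2\pi-\epsilon\frac{n(n+1)}{2}\xi_1+\cdots$), so the first $n_0$ Legendre coefficients of a kernel concentrated at scale $\epsilon$ near $\mu=1$ are all nearly equal, carry essentially no shape information, and do not decay until $n$ is comparable to an inverse power of $\epsilon$. Second, the spectral bound $\|\sigma-\Pi_{n_0}\sigma\|_{L^2}\lesssim n_0^{-k}\|\sigma\|_{H^k(\rd{\mu})}$ would have to be applied to the rescaled kernel as a function of $\mu$, and each $\mu$-derivative of that kernel costs a factor $1/\epsilon$, so the $H^k$ norm entering the estimate blows up as $\epsilon\to0$; the constant is not $\epsilon$-uniform and the exponent $\bigl(\frac{\ln\epsilon}{\ln\delta}\bigr)^k$ does not follow from this route.

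The paper closes precisely this step differently: it expands $\sigma$ in an $L^2$ basis of its own variable (weighted Hermite functions, \eqref{eqn:hermite_sigma}), so that $H_k$ regularity gives coefficient decay $\hat{\sigma}_n=\mathcal{O}(n^{-k})$ and accuracy $\Delta$ in $\sigma$ requires resolving $n_0=\Delta^{-1/k}$ coefficients; it then converts these coefficients into the monomial moments $\xi_m$ through an explicit change-of-basis matrix $\mathsf{C}$ with $\mathsf{C}_{mn}\sim\epsilon^{m-1}$, so that an admissible perturbation of size $\Delta$ in the needed coefficients moves $\xi_{n_0}$, and hence $\vec{b}$, by only $\sim\epsilon^{n_0-1}\Delta$. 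Requiring this to be detectable against data error $\delta$ forces $\epsilon^{\Delta^{-1/k}}\Delta\sim\delta$, i.e.\ $\Delta\gtrsim\bigl(\frac{\ln\epsilon}{\ln\delta}\bigr)^k$. To repair your argument you would need an analogous moment-to-coefficient conversion for $\sigma$ in the variable where the $H_k$ hypothesis actually lives (Hermite or any other $L^2$ basis on the growing interval), rather than the Legendre truncation in $\mu$; with that replacement your Step 1 counting slots in unchanged and reproduces the paper's conclusion.
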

\begin{proof}
Since $\sigma(\mu)\in L_2(\rd{\mu})$, we expand it using Hermite functions:
\begin{equation}\label{eqn:hermite_sigma}
\sigma = \sum_n\frac{1}{n!}\hat{\sigma}_nH_n(\mu)
\end{equation}
with $\hat{\sigma}_n = \langle \sigma\,,H_n\rangle = \int \sigma H_n(\mu)\rd{\mu}$. Here $H_n$ are weighted Hermite functions written as:
\begin{equation*}
H_n(\mu) = p_n(\mu) e^{-\mu^2/2}\,,\quad\text{with}\quad \int p_m(\mu)p_n(\mu)e^{-\mu^2}\rd{\mu} = \delta_{mn}n!\,.
\end{equation*}
Note that other $L_2$ basis functions can be used. Hermite polynomial is only one possible choice.

Meanwhile we recall definition of $\xi$:
\begin{equation*}
\xi_m = 2\pi\epsilon^{m-1}\int_0^{2/\epsilon}\sigma(\mu)\mu^m\rd{\mu}\,.
\end{equation*}

To prove the theorem, we allow $\Delta$ error in $\vec{\sigma}$, and see how much it affects $\vec{\xi}$, and then $\vec{b}$ in the end. Here $\vec{\xi} = [\xi_0,\xi_1,\cdots]'$ and $\vec{\sigma} = [\sigma_0,\sigma_1,\cdots]'$. We first note that $\sigma\in H_k$, and thus its Hermite polynomial coefficients decay algebraically fast. The standard approximation theory from spectral accuracy indicates:
\begin{equation*}
\sigma_n = \mathcal{O}(1/n^k)\,.
\end{equation*}
Suppose we tolerant error up to $\Delta$, then one needs to recover $\sigma_n$ up to at least $n_0 = \Delta^{-1/k}$, and the allowed perturbation in $\sigma_n$ is:
\begin{equation*}
\Delta_{\sigma_n} \leq \Delta \quad\text{for}\quad n = 0\,,\cdots,n_0\,.
\end{equation*}

We now look for explicit relation between $\xi$ and $\sigma_n$. Considering the explicit relation between monomials and the Hermite polynomials
\begin{equation*}
v^m = m!\sum_{k=0}^{m/2}\frac{1}{2^kk!(m-2k)!}p_{m-2k}(v)\,,
\end{equation*}
plugging it back in the equation for $\xi_m$, we have
\begin{align*}
\xi_m &= \epsilon^{m-1}\sum_{k=0}^{m/2} \frac{ 2 \pi m!}{2^kk!(m-2k)!}\int_0^{2/\epsilon}\sigma(v)p_{m-2k}\rd{v} \\\
&=\epsilon^{m-1}\sum_{k=0}^{m/2}\sum_{n=0}^\infty\frac{2 \pi m!}{2^kk!(m-2k)!n!}D_{n,m,k}\sigma_n \,,
\end{align*}
where we have used expansion in~\eqref{eqn:hermite_sigma} and defined $D_{n,m,k} = \int p_{m-2k}p_ne^{-v^2/2}\rd{v}$\,. In a matrix form one has:
\begin{equation*}
\vec{\xi} = \mathsf{C}\cdot\vec{\sigma} \,,
\end{equation*}
where $\mathsf{C}$ defined by:
\begin{equation*}
\mathsf{C}_{mn} = \epsilon^{m-1} \frac{2\pi m!}{n!}\sum_{k=0}^{m/2}\frac{D_{n,m,k}}{2^kk!(m-2k)!}\,.
\end{equation*}

Since we need to recover $\sigma_n$ up to $n = n_0 = \Delta^{-1/k}$, and the recovered coefficients need to be within error tolerance $\delta$, the tolerance for $\xi_{n_0}$ then is:
\begin{equation*}
\Delta_{\xi_{n_0}} \leq \Delta\sum_{k=0}^{n_0}\mathsf{C}_{n_0,k} \sim \epsilon^{n_0-1}\Delta\,,\quad\text{with}\quad n_0 = \Delta^{-1/k}\,.
\end{equation*}
Noting the relationship between $\xi$ and $\vec{b}$ in \eqref{Axb}, we see that the error allowance on $\vec{b}$ is $\epsilon^{n_0-1}\Delta$. With shrinking $\epsilon$, this restriction becomes more and more severe, making the inverse problem less practical. More specifically if we have $\delta$ error in $\vec{b}$, then setting:
\begin{equation*}
\epsilon^{\Delta^{-1/k}}\Delta \sim \delta
\end{equation*}
gives $\Delta>\left(\frac{\ln\epsilon}{\ln\delta}\right)^k \to\infty$ as $\epsilon\to0$, meaning that the accuracy in recovering $\sigma_n$ is lost and so it is with $\sigma$.
\end{proof}

%%%%%%%%%%%%%%%%%%%%%%%%%%%%%%%%%%%%%%%%%%%%%%%

\bibliographystyle{siam}
\bibliography{inverse_RTE_ref}

\begin{thebibliography}{10}

\bibitem{SRH05}
{\sc G.~Abdoulaev, K.~Ren, and A.~Hielscher}, {\em Optical tomography as a
  {PDE}-constrained optimization problem}, Inverse Problems, 21 (2005),
  pp.~1507--1530.

\bibitem{Arridge99}
{\sc S.~Arridge}, {\em Optical tomography in medical imaging}, Inverse
  Problems, 15 (1999), pp.~R41--93.

\bibitem{Arridge98}
{\sc S.~Arridge and W.~Lionheart}, {\em Nonuniqueness in diffusion-based
  optical tomography}, Opt. Lett., 23 (1998), pp.~882--884.

\bibitem{Arridge_Schotland09}
{\sc S.~R. Arridge and J.~C. Schotland}, {\em Optical tomography: forward and
  inverse problems}, Inverse Problems, 25 (2009), p.~123010.

\bibitem{Bal_review}
{\sc G.~Bal}, {\em Inverse transport theory and applications}, Inverse
  Problems, 25 (2009), p.~053001.

\bibitem{Bal09}
{\sc G.~Bal and A.~Jollivet}, {\em Time-dependent angularly averaged inverse
  transport}, Inverse Problems, 25 (2009), p.~075010.

\bibitem{Bal10a}
\leavevmode\vrule height 2pt depth -1.6pt width 23pt, {\em Stability for
  time-dependent inverse transport}, SIAM J. Math. Anal., 42 (2010),
  pp.~679--700.

\bibitem{Bal08}
{\sc G.~Bal, I.~Langmore, and F.~Monard}, {\em Inverse transport with isotropic
  sources and angularly averaged measurement}, Inverse Probl. Imaging, 2
  (2008), pp.~23--42.

\bibitem{BSS84}
{\sc C.~Bardos, S.~Santos, and R.~Sentis}, {\em Diffusion approximation and
  computation of the critical size}, Trans. Amer. Math. Soc., 284 (1984),
  pp.~617--649.

\bibitem{CaseZweifel}
{\sc K.~M. Case and P.~Zweifel}, {\em Linear Transport Theory}, Addison-Wesley,
  1967.

\bibitem{ChenLiWang}
{\sc K.~Chen, Q.~Li, and L.~Wang}, {\em Stability of stationary inverse
  transport equation in diffusion scaling}, arXiv: 1703.00097,  (2017).

\bibitem{Gamba}
{\sc Y.~Cheng, I.~M. Gamba, and K.~Ren}, {\em Recovering doping profiles in
  semiconductor devices with the {B}oltzmann-{P}oisson model}, J. Comput.
  Phys., 230 (2011), pp.~3391--3412.

\bibitem{CS96}
{\sc M.~Choulli and P.~Stefanov}, {\em Inverse scattering and inverse boundary
  value problems for the linear {B}oltzmann equation}, Comm. P.D.E, 21 (1996),
  pp.~763--785.

\bibitem{ChoulliStefenov}
\leavevmode\vrule height 2pt depth -1.6pt width 23pt, {\em An inverse boundary
  value problem for the stationary transport equation}, Osaka J. Math., 36
  (1998), pp.~87--104.

\bibitem{Lions93}
{\sc R.~Dautray and J.-L. Lions}, {\em Mathematical analysis and numerical
  methods for science and technology}, Springer, Berlin, 1993.

\bibitem{Uhlmann03}
{\sc A.~Greenleaf, M.~Lassas, and G.~Uhlmann}, {\em On nonuniqueness for
  {C}alder\'on inverse problem}, Mathematical Research Letters, 10 (2003),
  pp.~685--693.

\bibitem{Uhlmann09}
{\sc G.Uhlmann}, {\em Electrical impedance tomography and {C}alder\'on's
  problem}, Inverse Problems, 25 (2009), p.~123011.

\bibitem{Larsen88}
{\sc E.~Larsen}, {\em Solution of three-dimensional inverse transport
  problems}, Transport Theory and Stat. Phys., 17 (1988), pp.~147--167.

\bibitem{LK74}
{\sc E.~Larsen and J.~Keller}, {\em Asymptotic solution of neutron transport
  problems for small mean free paths}, J. Math. Phys., 15 (1974), pp.~75--81.

\bibitem{Larsen_FP}
{\sc C.~L. Leakeas and E.~W. Larsen}, {\em Generalized {F}okker-{P}lanck
  approximations of particle transport with highly forward-peaked scattering},
  Nuclear Science and Engineering, 137 (2001), pp.~236--250.

\bibitem{Machida:16}
{\sc M.~Machida, G.~Y. Panasyuk, Z.-M. Wang, V.~A. Markel, and J.~C.
  Schotland}, {\em Radiative transport and optical tomography with large
  datasets}, J. Opt. Soc. Am. A, 33 (2016), pp.~551--558.

\bibitem{Machida_Schotland_inverse}
{\sc M.~Machida and J.~C. Schotland}, {\em Inverse {B}orn series for the
  radiative transport equation}, Inverse Problems, 31 (2015), p.~095009.

\bibitem{Pomraning_FP}
{\sc G.~Pomraning}, {\em The {F}okker-{P}lanck operator as an asymptotic
  limit}, Mathematical Models and Methods in Applied Sciences, 02 (1992),
  pp.~21--36.

\bibitem{Ren_review}
{\sc K.~Ren}, {\em Recent developments in numerical techniques for
  transport-based medical imaging methods}, Comm. Comput. Phys, 8 (2010),
  pp.~1--50.

\bibitem{RenBalHielscher_trans_diff}
{\sc K.~Ren, G.~Bal, and A.~H. Hielscher}, {\em Transport- and diffusion-based
  optical tomography in small domains: a comparative study}, Appl. Opt., 46
  (2007), pp.~6669--6679.

\bibitem{Romanov96}
{\sc V.~Romanov}, {\em Stability estimates in problems of recovering the
  attenuation coefficient and the scattering indicatrix for the transport
  equation}, J. Inverse Ill-Posed Probl., 4 (1996), pp.~297--305.

\bibitem{Arridge_gradient_2013}
{\sc T.~Saratoon, T.~Tarvainen, B.~T. Cox, and S.~R. Arridge}, {\em A
  gradient-based method for quantitative photoacoustic tomography using the
  radiative transfer equation}, Inverse Problems, 29 (2013), p.~075006.

\bibitem{Stefenov2}
{\sc P.~Stefanov and A.~Tamasan}, {\em Uniqueness and non-uniqueness in inverse
  radiative transfer}, Proceedings of the American Mathematical Society, 137
  (2009), pp.~2335--2344.

\bibitem{Tang}
{\sc J.~Tang, W.~Han, and B.~Han}, {\em A theoretical study for {RTE}-based
  parameter identification problems}, Inverse Problems, 29 (2013), p.~095002.

\bibitem{Wang}
{\sc J.~Wang}, {\em Stability estimates of an inverse problem for the
  stationary transport equation}, Ann. Inst. Henri Poincar\'{e}, 70 (1999),
  pp.~473--495.

\end{thebibliography}

\end{document}